\documentclass[12pt, a4paper]{article}

\usepackage{amsmath, amssymb, amsthm}
\usepackage{mathtools}

\usepackage[top=25mm, bottom=25mm, left=30mm, right=30mm]{geometry}
\usepackage{setspace}
\usepackage{booktabs}
\usepackage{array}
\usepackage{longtable}
\usepackage{multirow}

\usepackage{algorithm}
\usepackage{algpseudocode}

\usepackage{graphicx}

\usepackage{xcolor}
\usepackage[colorlinks=true, linkcolor=blue!60!black, citecolor=blue!50!black, urlcolor=blue!60!black]{hyperref}

\newtheorem{theorem}{Theorem}[section]
\newtheorem{lemma}[theorem]{Lemma}
\newtheorem{proposition}[theorem]{Proposition}
\newtheorem{corollary}[theorem]{Corollary}
\newtheorem{definition}{Definition}[section]
\newtheorem{remark}{Remark}[section]
\newtheorem{example}{Example}[section]

\begin{document}

\title{Counting Weight-\(k\) Italian Dominating Sets on Trees and Cycles}
\author{%
  Pingping Shao\thanks{College of Science, China Jiliang University, Hangzhou 310018, China} \ \ and Chengye Zhao\thanks{Corresponding author. College of Science, China Jiliang University, Hangzhou 310018, China; E-mail: \texttt{07A0802084@cjlu.edu.cn}}
}
\date{}

\maketitle

\begin{abstract}
We study the problem of counting \emph{Italian dominating functions} (IDFs) of each possible weight on paths, trees, and cycles. An Italian dominating function on a graph $G=(V,E)$ is a function $f\colon V\to\{0,1,2\}$ such that every vertex $v$ with $f(v)=0$ satisfies $\sum_{u\in N(v)}f(u)\ge 2$; the \emph{Italian domination polynomial} $D_I(G,x)=\sum_k d_I(G,k)\,x^k$ records the number $d_I(G,k)$ of such functions of weight~$k$. This polynomial is the Italian analogue of the well-studied domination and Roman domination polynomials, and computing its coefficients on a fixed graph class is, in essence, a \emph{constrained integer composition} problem: passing to the deficit variables $y(v)=2-f(v)$, each coefficient $d_I(G,2n-j)$ counts the number of compositions of $j$ into $n$ parts, each at most~$2$, that additionally satisfy a local adjacency capacity constraint. We make this connection explicit and use it to derive closed-form expressions for $d_I(C_n,2n-j)$, $j=0,\ldots,5$, as trinomial coefficients minus explicitly enumerated forbidden configurations. On the algorithmic side, we give dynamic programming algorithms that compute \emph{all} coefficients of $D_I$ on path graphs ($O(n^2)$ time, $O(n)$ space), on general trees ($O(n^2)$ time, $O(n)$ space, via a post-order merge with a deficit parameter), and on cycle graphs ($O(n^2)$ time, $O(n)$ space, via boundary conditioning), together with formal correctness proofs and complexity analyses. The known values $\gamma_I(P_n)=\lfloor n/2\rfloor+1$ and $\gamma_I(C_n)=\lceil n/2\rceil$ are recalled with references rather than reproved. All numerical results are verified against brute-force enumeration, and complete tables are provided.
\end{abstract}

\noindent\textbf{Keywords:} Italian domination; Domination polynomial; Constrained integer composition; Integer partition; Dynamic programming; Paths, trees and cycles

\bigskip

\section{Introduction}

Graph domination is a fundamental topic in combinatorial optimization with applications in network security, facility location, and resource allocation. The concept originated from the Roman Empire defense strategy studied by Cockayne et al.\ \cite{cockayne2004}, where each region is assigned $0$, $1$, or $2$ legions, and every undefended region must have a neighbor with two legions.

Let $G=(V,E)$ be a graph. A \emph{Roman dominating function} (RDF) is a function $f\colon V\to\{0,1,2\}$ such that every vertex $v$ with $f(v)=0$ has at least one neighbor $u$ with $f(u)=2$. The minimum weight $\omega(f)=\sum_{v\in V} f(v)$ over all RDFs is the \emph{Roman domination number} $\gamma_R(G)$.

Chellali et al.\ \cite{chellali2016} introduced \emph{Roman~$\{2\}$-domination} (also called \emph{Italian domination}), relaxing the condition: a function $f\colon V\to\{0,1,2\}$ is an \emph{Italian dominating function} (IDF) if every vertex $v$ with $f(v)=0$ satisfies $\sum_{u\in N(v)} f(u)\ge 2$. This allows two neighbors each assigned~$1$ to jointly protect an undefended vertex, rather than requiring a single neighbor assigned~$2$. The minimum weight over all Italian dominating functions is the \emph{Italian domination number} $\gamma_I(G)$. The Italian domination number was first studied under the name Roman~$\{2\}$-domination \cite{chellali2016}; the parameter and its variants are surveyed in \cite{chellali2021}, and several of its basic values were obtained in the manuscripts and papers \cite{brouwer2012,henningklostermeyer2017,klostermeyer2019}. We refer to Haynes, Hedetniemi and Henning \cite{haynes2020} for general background on domination in graphs.

Beyond finding the minimum weight, a natural and more refined problem is to \emph{count} the number of Italian dominating functions of each possible weight. This leads to the Italian domination polynomial:

\begin{definition}[Italian domination polynomial]
\label{def:poly}
The \emph{Italian domination polynomial} of a graph $G$ is
\[
  D_I(G,x)=\sum_{k=\gamma_I(G)}^{2|V(G)|} d_I(G,k)\,x^k,
\]
where $d_I(G,k)$ counts the Italian dominating functions of weight~$k$.
\end{definition}

We refer to the set of Italian dominating functions of weight~$k$ as the \emph{weight-$k$ Italian dominating set}, and $d_I(G,k)$ counts its cardinality.

\begin{definition}[weight-$k$ Italian dominating set]
\label{def:korder}
Given a graph $G=(V,E)$ and a positive integer $k$, the \emph{weight-$k$ Italian dominating set} of~$G$ is the collection of all Italian dominating functions $f\colon V\to\{0,1,2\}$ on~$G$ with total weight $\omega(f)=\sum_{v\in V}f(v)=k$. Its cardinality is $d_I(G,k)$.
\end{definition}

\subsection{Related work}

The polynomial that counts the number of dominating sets of each size, the \emph{domination polynomial} $D(G,x)$, was introduced by Alikhani and Peng \cite{alikhani2009} and has been studied extensively; in particular, Kotek et al.\ \cite{kotek2012} proved recurrence relations and splitting formulas for $D(G,x)$ based on articulation vertices, and Kotek, Preen and Tittmann \cite{kotek2014} gave subset-sum representations of domination polynomials. The same counting philosophy has been applied to Roman domination: Mirhoseini and Jafari Rad \cite{mirhoseini2024} introduced the \emph{Roman domination polynomial} and computed it for several graph classes, and Deepak et al.\ \cite{deepak2021} determined the Roman domination polynomial of cycles; we compare our cycle closed forms with theirs in Remark~\ref{rem:romancompare}. On the counting side, Zheng and Koh \cite{zheng2014} gave exact algorithms for counting minimum Roman dominating functions, and Abu-Khzam, Fernau and Mann \cite{abukhzam2024} studied the enumeration of (minimal) Roman dominating functions on graph classes such as paths and forests. On the complexity side, counting dominating sets is $\#$P-complete even for split and chordal bipartite graphs \cite{kijima2011}. Closest in spirit to the present work are the linear-time dynamic program of Wei and Lu \cite{weilu2019} for an independent variant of Italian domination on block graphs (a minimum-weight problem, not a counting one) and the transfer-matrix algorithm of Mertens \cite{mertens2024} for computing domination polynomials of grids, cylinders, tori, and king graphs. Neither of these works treats the Italian domination polynomial studied here.

The Italian domination polynomial $D_I(G,x)$ studied here is the direct Italian analogue of the Roman domination polynomial of \cite{mirhoseini2024,deepak2021}; to the best of our knowledge its systematic algorithmic study on paths, trees, and cycles has not appeared before. A central observation of this paper is that the counting problem behind $D_I$ is naturally a \emph{constrained integer composition} problem, connecting it to the classical theory of integer partitions \cite{andrews1998}: via the deficit transform $y(v)=2-f(v)$, the coefficient $d_I(G,2n-j)$ counts compositions of $j$ into $n$ parts (each in $\{0,1,2\}$) that satisfy a local adjacency capacity constraint. Compositions whose parts are constrained by local adjacency conditions form a classical subject of enumerative combinatorics, studied asymptotically by Bender and Canfield \cite{bendercanfield2009}; the aim here is not to add new enumeration results within that theory but to establish a bridge from a graph-theoretic counting problem to this classical setting and to exploit the resulting structure algorithmically. This composition viewpoint provides both clean closed-form expressions (Section~\ref{sec:partitions} and Section~\ref{sec:closedform}) and a transparent interpretation of the dynamic programs in Sections~\ref{sec:path}--\ref{sec:cycle}.

\subsection{Our contributions}

Computing the full coefficient vector $(d_I(G,k))_{k}$ is significantly harder than finding $\gamma_I(G)$, since one must enumerate all valid functions, not just find the minimum. For general graphs, the problem is believed to be computationally hard: the $\{0,1\}$-valued case already corresponds to counting subsets $D\subseteq V$ such that every vertex outside $D$ has at least two neighbors in $D$, a $2$-domination-type counting problem. Hardness results for closely related problems corroborate this: counting dominating sets is $\#$P-complete even for split and chordal bipartite graphs \cite{kijima2011}, and deciding whether a graph admits an Italian dominating function of weight at most a given bound is NP-complete \cite{ferrari2025}. For structured graph classes such as paths, trees, and cycles, however, efficient algorithms exist. The contributions of this paper are:

\begin{enumerate}
  \item We establish the connection between counting Italian dominating functions and \emph{constrained integer compositions}: with the deficit variables $y(v)=2-f(v)$, the coefficient $d_I(G,2n-j)$ equals the number of compositions of $j$ into $n$ parts, each at most~$2$, that satisfy the local capacity constraint $\sum_{u\in N(v)}y(u)\le 2\deg(v)-2$ for every $v$ with $y(v)=2$ (Lemma~\ref{lem:deficit}). The unconstrained baseline is the trinomial coefficient $T(n,j)=[z^j](1+z+z^2)^n$ (Section~\ref{sec:partitions}).
  \item We present a chain dynamic programming algorithm for computing all coefficients of $D_I(P_n, x)$ on path graphs in $O(n^2)$ time and $O(n)$ space, with a correctness proof and a complexity analysis (Section~\ref{sec:path}).
  \item We develop a tree DP algorithm for computing all coefficients of $D_I(T, x)$ on any tree~$T$ with $n$ vertices in $O(n^2)$ time and $O(n)$ space, based on a post-order merge with a \emph{deficit} parameter, again with a correctness proof and a complexity analysis (Section~\ref{sec:tree}).
  \item We develop a cycle DP algorithm for computing all coefficients of $D_I(C_n, x)$ in $O(n^2)$ time and $O(n)$ space by \emph{boundary conditioning}, i.e., by conditioning on the values of two adjacent vertices and reducing the cycle to a modified path DP (Section~\ref{sec:cycle}).
  \item We recall the known values $\gamma_I(P_n)=\lfloor n/2\rfloor+1$ (Proposition~\ref{prop:gamma_path}) and $\gamma_I(C_n)=\lceil n/2\rceil$ (Proposition~\ref{prop:gamma_cycle}) with appropriate references, and derive closed-form expressions for $d_I(C_n, 2n-j)$, $j=0,\ldots,5$, presented as trinomial coefficients minus explicitly enumerated forbidden configurations, and we show that $d_I(C_n,2n-j)$ is a polynomial in $n$ of degree $j$ for each fixed $j$ (Section~\ref{sec:partitions}, Lemma~\ref{lem:poly}).
  \item We establish the coefficient-wise dominance $D_I(C_n,x)\ge D_I(P_n,x)$ as a direct consequence of the deficit formulation (Remark~\ref{rem:dom}).
  \item All algorithms are verified against brute-force enumeration, and complete numerical tables are provided for path graphs $P_n$ ($n=1,\ldots,10$), star graphs $K_{1,m}$ ($m=1,\ldots,8$), cycle graphs $C_n$ ($n=3,\ldots,12$), and several additional trees (Section~\ref{sec:numerics}).
\end{enumerate}

\section{Preliminaries}

\subsection{Notation}

Let $G=(V,E)$ be a simple, finite, undirected graph with vertex set $V$ and edge set~$E$. The \emph{open neighborhood} of a vertex $v\in V$ is $N(v)=\{u\in V : uv\in E\}$, and the \emph{closed neighborhood} is $N[v]=N(v)\cup\{v\}$. The \emph{degree} of $v$ is $\deg(v)=|N(v)|$.

A \emph{path graph} $P_n$ has vertices $v_1, v_2, \ldots, v_n$ and edges $v_i v_{i+1}$ for $i=1,\ldots,n-1$. A \emph{cycle graph} $C_n$ ($n\ge 3$) has vertices $v_1, v_2, \ldots, v_n$ and edges $v_i v_{i+1}$ for $i=1,\ldots,n-1$ plus the edge $v_n v_1$. A \emph{star graph} $K_{1,m}$ has a center vertex connected to $m$ leaves. A \emph{tree} $T=(V,E)$ is a connected acyclic graph. Any tree can be rooted at an arbitrary vertex~$r$, inducing a parent--child relationship: for each vertex $v\ne r$, its \emph{parent} is the neighbor on the unique path from $v$ to~$r$, and its \emph{children} are the remaining neighbors. A vertex with no children is a \emph{leaf}.

Note that $C_n$ can be obtained from $P_n$ by adding the \emph{closing edge} $v_n v_1$. This observation is central to our cycle algorithm (Section~\ref{sec:cycle}) and to the path--cycle comparison of Remark~\ref{rem:dom}.

\subsection{Italian domination}

\begin{definition}[Italian dominating function]
\label{def:idf}
An \emph{Italian dominating function} (IDF) on $G=(V,E)$ is a function $f\colon V\to\{0,1,2\}$ such that for every $v\in V$ with $f(v)=0$,
\[
  \sum_{u\in N(v)}f(u)\ge 2.
\]
The \emph{weight} of $f$ is $\omega(f)=\sum_{v\in V}f(v)$. The minimum weight over all Italian dominating functions is the \emph{Italian domination number}, denoted $\gamma_I(G)$.
\end{definition}

\begin{remark}
Italian domination coincides with the Roman~$\{2\}$-domination of Chellali et al.\ \cite{chellali2016}. It differs from Roman domination (Cockayne et al.\ \cite{cockayne2004}) in that Roman domination requires a neighbor with $f(u)=2$, while Italian domination allows two neighbors with $f(u)=1$ to jointly satisfy the constraint. We refer to \cite{chellali2021} for a survey of Roman-type domination variants.
\end{remark}

\begin{proposition}
\label{prop:basic}
Let $G$ be a graph with $n$ vertices. Then:
\begin{enumerate}
\item[(i)] If $v$ is an isolated vertex (i.e., $\deg(v)=0$), then $f(v)\ge 1$ for any IDF~$f$.
\item[(ii)] $\gamma_I(G)\ge 1$ for any non-empty graph.
\item[(iii)] $d_I(G, 2n)=1$, corresponding to the unique function $f\equiv 2$.
\end{enumerate}
\end{proposition}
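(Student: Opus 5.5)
All three parts follow directly from Definition~\ref{def:idf}, so the plan is to check each one from the defining condition, without invoking any machinery. The only care needed is with the empty sums that arise when a vertex has no neighbours.

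For (i), let $v$ be isolated and argue by contradiction. Suppose an IDF $f$ has $f(v)=0$. The Italian condition then requires $\sum_{u\in N(v)}f(u)\ge 2$. But $N(v)=\emptyset$, so this sum is empty and equals $0$, which is a contradiction. Since $f(v)\in\{0,1,2\}$, we conclude $f(v)\ge 1$.

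For (ii), take $G$ with $V\neq\emptyset$ and fix any IDF $f$. One IDF always exists, namely $f\equiv 2$, so the minimum defining $\gamma_I(G)$ is taken over a nonempty finite set. It suffices to show $\omega(f)\ge 1$, and we split into two cases.
\begin{itemize}
\item If some vertex has $f(v)\ge 1$, then $\omega(f)\ge 1$ immediately, because all values are nonnegative.
\item Otherwise $f\equiv 0$. Pick any vertex $v$; it has $f(v)=0$, yet $\sum_{u\in N(v)}f(u)=0<2$. This violates the definition, so $f\equiv 0$ is not an IDF and this case cannot occur.
\end{itemize}
Hence every IDF has weight at least $1$, and so $\gamma_I(G)\ge 1$.

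For (iii), the constant function $f\equiv 2$ has weight $2n$. It is vacuously an IDF, since no vertex takes the value $0$. For uniqueness, suppose $g\colon V\to\{0,1,2\}$ has $\omega(g)=2n$. Then
\[
\sum_{v\in V}\bigl(2-g(v)\bigr)=0,
\]
and each summand is nonnegative, so every summand vanishes and $g\equiv 2$. Therefore $d_I(G,2n)=1$.

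There is no real obstacle here. The one point worth stating explicitly is the empty-sum convention in (i) and in the $f\equiv 0$ case of (ii).
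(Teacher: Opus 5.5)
Your proof is correct and follows the paper's argument for (i) and (iii): the empty neighbour sum rules out $f(v)=0$ at an isolated vertex, and the nonnegative summands $2-g(v)$ force $g\equiv 2$. For (ii), the paper only says it ``follows from (i)'', which literally covers only graphs that have an isolated vertex; your direct observation that $f\equiv 0$ violates the Italian condition at every vertex is the argument actually needed for an arbitrary graph with $V\neq\emptyset$.
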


\begin{proof}
(i) If $f(v)=0$ and $\deg(v)=0$, then $\sum_{u\in N(v)}f(u)=0<2$, a contradiction. (ii) follows from~(i). (iii) The function $f(v)=2$ for all $v$ has weight $2n$ and is the unique function achieving this weight, since any vertex with $f(v)<2$ would reduce the total below~$2n$.
\end{proof}

\subsection{Known values of the Italian domination number}

The following values are known; we state them for completeness and do not reproduce their proofs.

\begin{proposition}
\label{prop:gamma_path}
For the path graph $P_n$ ($n\ge 1$),
\[
  \gamma_I(P_n) = \left\lfloor \frac{n}{2}\right\rfloor + 1.
\]
\end{proposition}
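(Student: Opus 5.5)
The plan is to prove the two inequalities separately: an explicit construction for the upper bound, and a local counting (discharging) argument for the lower bound.

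\emph{Upper bound.} Place $f(v_1)=f(v_n)=1$ and cover the interior with a repeating pattern $1,0,1,0,\dots$. Concretely, take $f(v_i)=1$ for every odd $i$, and additionally set $f(v_n)=1$ when $n$ is even; all other values are $0$.
\begin{itemize}
\item Every interior vertex with value $0$ has even index $i$, so both $v_{i-1}$ and $v_{i+1}$ have value $1$.
\item Both endpoints have value $1$, so they impose no condition.
\item The weight is $\lceil n/2\rceil$ for odd $n$ and $n/2+1$ for even $n$, which equals $\lfloor n/2\rfloor+1$ in both cases.
\item The small cases $n=1,2$ are checked directly: $f(v_1)=1$ for $n=1$, and $f\equiv 1$ for $n=2$.
\end{itemize}

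\emph{Lower bound.} Let $f$ be any IDF on $P_n$ and write $w=\omega(f)$. The key inequality comes from letting each vertex $u$ send $f(u)$ units of charge to \emph{each} of its neighbours.
\begin{itemize}
\item Every vertex $v$ with $f(v)=0$ receives at least $2$ units.
\item The total charge sent is $\sum_u \deg(u)f(u)\le 2w$, with deficiency $f(v_1)+f(v_n)$ coming from the endpoints. Hence $2|V_0|\le 2w-f(v_1)-f(v_n)$, where $V_0=f^{-1}(0)$.
\item Since $f(v)\le 2$, we also have $n-|V_0|\le |V\setminus V_0|\le w$.
\item Combining the two gives $2n\le 4w-f(v_1)-f(v_n)$.
\end{itemize}
Each endpoint $v_1$ has degree $1$, so $f(v_1)=0$ forces $f(v_2)=2$. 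This interaction at the ends is where the $+1$ must come from.

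I expect the main obstacle to be extracting exactly $\lfloor n/2\rfloor+1$ from this inequality rather than roughly $n/2$. The crude bound $n-|V_0|\le w$ loses whenever some vertex has value $2$. So I would instead use the sharper count $|V\setminus V_0|=w-|f^{-1}(2)|$, which gives
\[
2n \le 4w - 2|f^{-1}(2)| - f(v_1) - f(v_n),
\]
that is, $w\ge n/2 + \bigl(2|f^{-1}(2)|+f(v_1)+f(v_n)\bigr)/4$. The task is then to show that the correction term $2|f^{-1}(2)|+f(v_1)+f(v_n)\ge 2$, which splits into cases at each end:
\begin{itemize}
\item if $f(v_1)\ge 1$, that endpoint contributes at least $1$;
\item if $f(v_1)=0$, then $f(v_2)=2$, contributing $2$.
\end{itemize}
Summing over both ends gives a correction of at least $2$, except possibly when a single $2$ is shared by both ends, which happens only for $n\le 3$ and is checked by hand. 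This yields $w\ge (n+1)/2$, and integrality of $w$ gives $w\ge\lfloor n/2\rfloor+1$ for odd $n$.

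For even $n$, however, this only gives $w\ge n/2+1/2$, and rounding up then gives $w\ge n/2+1$. Since $w$ is an integer and $\lceil n/2+1/2\rceil=n/2+1$ when $n$ is even, this works in both parities. I would double-check the shared-$2$ case carefully, and as a fallback prove the lower bound by induction on $n$: remove $v_n,v_{n-1}$ and repair $f$ on $v_{n-2}$ without increasing the weight by more than $w-1$.
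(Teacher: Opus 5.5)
Your proof is correct. It necessarily differs from the paper, which gives no proof: Proposition~\ref{prop:gamma_path} is stated as a known result and attributed to Klostermeyer and MacGillivray \cite{klostermeyer2019}, with a pointer to the framework of \cite{chellali2016}.

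Your argument makes the statement self-contained. The pattern $1,0,1,0,\dots$, with $f(v_n)=1$ appended for even $n$, gives the upper bound. For the lower bound, you sum the Italian condition over $V_0=f^{-1}(0)$, use $\sum_u \deg(u)f(u)=2w-f(v_1)-f(v_n)$, and use $|V\setminus V_0|=w-|f^{-1}(2)|$. Together these yield
\[
  4w \;\ge\; 2n + 2|f^{-1}(2)| + f(v_1)+f(v_n).
\]
Once the correction term is at least $2$, this gives $w\ge\lceil (n+1)/2\rceil=\lfloor n/2\rfloor+1$ in both parities.

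Two tidy-ups are worth making. First, the correction term is at least $2$ with no exceptional case. Either both endpoints carry value at least $1$, or some endpoint carries $0$, and then its unique neighbour carries a $2$ that alone contributes $2$. So the hand check of the ``shared $2$'' case and the inductive fallback can be dropped. Second, the endpoint argument needs $n\ge 2$, so state the lower bound for $n=1$ separately; it is just Proposition~\ref{prop:basic}(i).

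Compared with citing the literature, your route costs a few lines but is elementary. It also fits the paper's own viewpoint, since the key inequality is exactly the sum of the constraints (P2) over the vertices with deficit $y(v)=2$.
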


This is a known result: Klostermeyer and MacGillivray \cite{klostermeyer2019} proved $\gamma_I(P_n)=\lceil (n+1)/2\rceil=\lfloor n/2\rfloor+1$, and the value also follows from the framework of Chellali et al.\ \cite{chellali2016}; see also \cite{henningklostermeyer2017} for the tree case.

\begin{proposition}
\label{prop:gamma_cycle}
For the cycle graph $C_n$ ($n\ge 3$),
\[
  \gamma_I(C_n) = \left\lceil \frac{n}{2} \right\rceil.
\]
\end{proposition}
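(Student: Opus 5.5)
We prove $\gamma_I(C_n)=\lceil n/2\rceil$ by a matching construction and lower bound. The paper says these values are recalled rather than reproved, but the argument is short.

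\emph{Upper bound.} Label the vertices $v_1,\dots,v_n$ cyclically.

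If $n$ is even, set $f(v_i)=1$ for odd $i$ and $f(v_i)=0$ for even $i$. Each $v_i$ with $i$ even has the two neighbours $v_{i-1},v_{i+1}$, both of odd index (indices mod $n$; $v_n$ is adjacent to $v_{n-1}$ and $v_1$). So each such vertex receives neighbour sum $2$, and $\omega(f)=n/2$.

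If $n$ is odd, use the same pattern on $v_1,\dots,v_{n-1}$ and also set $f(v_n)=1$. Then $v_n$ has value $1$ and needs no check. Every even-indexed vertex $v_i$ with $i\le n-1$ has neighbours $v_{i-1},v_{i+1}$ of value $1$, since $v_{i+1}$ is either odd-indexed or $v_n$. The weight is $(n-1)/2+1=\lceil n/2\rceil$.

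\emph{Lower bound.} We use a discharging (double counting) argument. Let $f$ be any IDF on $C_n$, and sum the quantity $\sum_{u\in N(v)}f(u)$ over all $v$. Each vertex is counted once by each of its two neighbours, so
\[
  \sum_v\sum_{u\in N(v)}f(u)=2\,\omega(f).
\]
Let $V_0=\{v: f(v)=0\}$. Every $v\in V_0$ contributes at least $2$ to the left side, so $2\omega(f)\ge 2|V_0|$, i.e. $\omega(f)\ge |V_0|$. On the other hand, every vertex outside $V_0$ has value at least $1$, so $\omega(f)\ge n-|V_0|$. Adding the two inequalities gives $2\omega(f)\ge n$, hence $\omega(f)\ge\lceil n/2\rceil$ because $\omega(f)$ is an integer.

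The only step needing care is the lower-bound double counting, and specifically that each vertex has exactly two neighbours. On the path $P_n$ the endpoints have degree $1$, which is why the path value differs.
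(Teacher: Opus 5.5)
Your proof is correct and complete. It differs from the paper simply because the paper gives no proof: Proposition~\ref{prop:gamma_cycle} is quoted from \cite{ferrari2025}, and the only support in the text is the computational check by Algorithm~\ref{alg:cycle} for $n\le 12$ (Table~\ref{tab:cycle_a}). That check confirms the algorithm against the known formula but says nothing for larger $n$, whereas your argument is self-contained and covers all $n\ge 3$.

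Your construction is really one rule for both parities: $f=1$ on odd indices and $0$ on even ones. For odd $n$, your extra assignment $f(v_n)=1$ is that same rule.

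Your lower bound only uses $\deg(u)\le 2$. Since $\sum_v\sum_{u\in N(v)}f(u)=\sum_u\deg(u)f(u)\le\Delta\,\omega(f)$, your two inequalities give $\omega(f)\ge 2n/(\Delta+2)$ on any graph of maximum degree $\Delta$. This is a general bound that happens to be tight on cycles.

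This also means your closing remark about paths needs adjusting. The double count does not break on $P_n$: it still yields $\lceil n/2\rceil$, which is the true value for odd $n$ because $\lfloor n/2\rfloor+1=\lceil n/2\rceil$. For even $n$ the endpoints only block equality. Weight $n/2$ would force every inequality in your chain to be tight, hence $f(v_1)=f(v_n)=0$ and all positive values equal to $1$. That contradicts the endpoint requirement $f(v_1)=0\Rightarrow f(v_2)=2$.
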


This result is also known and is recorded, for instance, by Ferrari, Leoni and Lopez Pujato \cite{ferrari2025} (and the references therein). Our cycle algorithm of Section~\ref{sec:cycle} provides an independent computational confirmation of the value for all $n\le 12$ (Table~\ref{tab:cycle_a}).

\section{Counting, Deficit Variables, and Constrained Integer Compositions}
\label{sec:partitions}

In this section we expose the structural connection between the counting problem for Italian dominating functions and a classical problem in additive combinatorics: counting \emph{integer compositions} (ordered partitions into nonnegative parts) subject to constraints. This connection is used in Section~\ref{sec:closedform} to derive closed forms and gives a transparent interpretation of the dynamic programs of Sections~\ref{sec:path}--\ref{sec:cycle}.

\subsection{Deficit variables}

\begin{definition}[Deficit variables]
\label{def:deficit}
Let $G=(V,E)$ have $n$ vertices and let $f\colon V\to\{0,1,2\}$. The \emph{deficit} of $f$ is the vector $y\colon V\to\{0,1,2\}$ defined by
\[
  y(v) \;=\; 2 - f(v) \qquad (v\in V).
\]
Clearly $f(v)=0$ iff $y(v)=2$, and
\[
  \sum_{v\in V} y(v) \;=\; 2n - \omega(f).
\]
\end{definition}

\begin{lemma}[Deficit reformulation]
\label{lem:deficit}
Let $G$ be a graph with $n$ vertices, let $f\colon V\to\{0,1,2\}$, and let $y(v)=2-f(v)$. Then $f$ is an Italian dominating function of weight $2n-j$ if and only if
\begin{enumerate}
\item[(P1)] $\displaystyle\sum_{v\in V} y(v) = j$, and
\item[(P2)] for every vertex $v$ with $y(v)=2$,
\begin{equation}
  \sum_{u\in N(v)} y(u) \;\le\; 2\deg(v) - 2 . \label{eq:p2}
\end{equation}
\end{enumerate}
\end{lemma}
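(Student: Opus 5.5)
The proof is a direct translation of the two defining conditions of an Italian dominating function (Definition~\ref{def:idf}) through the bijection $f\leftrightarrow y$ of Definition~\ref{def:deficit}. Since $y(v)=2-f(v)$ is a bijection of $\{0,1,2\}$ onto itself, it suffices to show two things. First, the weight condition $\omega(f)=2n-j$ is equivalent to (P1). Second, the Italian condition at each vertex is equivalent to (P2). We treat the two separately and conclude the ``if and only if'' by combining them.

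For the weight condition, use the identity $\sum_{v}y(v)=2n-\omega(f)$ recorded in Definition~\ref{def:deficit}. Then $\omega(f)=2n-j$ holds exactly when $\sum_v y(v)=j$, which is (P1).

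For the domination condition, fix a vertex $v$. By Definition~\ref{def:deficit}, $f(v)=0$ iff $y(v)=2$, so the set of vertices at which a constraint is imposed is the same in both formulations. For such $v$, substitute $f(u)=2-y(u)$ into the neighborhood sum:
\[
\sum_{u\in N(v)}f(u)=\sum_{u\in N(v)}\bigl(2-y(u)\bigr)=2\deg(v)-\sum_{u\in N(v)}y(u).
\]
The Italian requirement $\sum_{u\in N(v)}f(u)\ge 2$ is therefore equivalent to $\sum_{u\in N(v)}y(u)\le 2\deg(v)-2$, which is \eqref{eq:p2}. Vertices with $f(v)\ge1$, that is $y(v)\le1$, carry no constraint in either formulation.

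No step is a genuine obstacle; the argument is pure substitution. The only point needing care is that the equivalence must hold vertex by vertex, including degenerate cases. For an isolated vertex, $\deg(v)=0$, so \eqref{eq:p2} reads $0\le -2$, which is false. Hence (P2) forbids $y(v)=2$ at isolated vertices, matching Proposition~\ref{prop:basic}(i). With that case checked, the two directions of the lemma follow simultaneously, since every step above is an equivalence.
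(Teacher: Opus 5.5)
Your proposal is correct and matches the paper's proof: you identify the weight condition with (P1) via $\sum_v y(v)=2n-\omega(f)$, and you turn the Italian condition at each vertex with $f(v)=0$ (equivalently $y(v)=2$) into \eqref{eq:p2} by substituting $f(u)=2-y(u)$. Your isolated-vertex check is a harmless consistency remark, matching Proposition~\ref{prop:basic}(i).
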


\begin{proof}
By Definition~\ref{def:deficit}, $\omega(f)=2n-j$ iff (P1) holds, and $f(v)=0$ iff $y(v)=2$. For such a vertex $v$, the Italian condition $\sum_{u\in N(v)} f(u)\ge 2$ is, substituting $f(u)=2-y(u)$,
\[
  2\deg(v) - \sum_{u\in N(v)} y(u) \;\ge\; 2
  \quad\Longleftrightarrow\quad
  \sum_{u\in N(v)} y(u) \;\le\; 2\deg(v)-2 ,
\]
which is \eqref{eq:p2}. Vertices with $y(v)\ne 2$ impose no condition. The equivalence follows.
\end{proof}

\begin{corollary}[Cycles]
\label{cor:cycle_deficit}
For the cycle $C_n$ ($n\ge 3$), a vector $y\in\{0,1,2\}^n$ corresponds to an IDF of weight $2n-j$ if and only if $\sum_{i=1}^n y_i=j$ and
\[
  y_i = 2 \;\Longrightarrow\; y_{i-1} + y_{i+1} \le 2 \qquad (\text{indices modulo } n).
\]
\end{corollary}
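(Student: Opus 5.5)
This is a direct specialization of Lemma~\ref{lem:deficit} to $G=C_n$, so the plan is simply to instantiate (P1) and (P2) and read off the two conditions. First, identify a function $f\colon V(C_n)\to\{0,1,2\}$ with its deficit vector $y=(y_1,\ldots,y_n)\in\{0,1,2\}^n$, where $y_i=2-f(v_i)$. By Definition~\ref{def:deficit} this correspondence is a bijection. Hence it suffices to show that the conditions in the corollary are exactly (P1) and (P2) for $C_n$.

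Condition (P1) is verbatim $\sum_{i=1}^n y_i=j$. For (P2), use the structure of the cycle: for $n\ge 3$ every vertex $v_i$ has $\deg(v_i)=2$ and $N(v_i)=\{v_{i-1},v_{i+1}\}$, with indices taken modulo $n$. The hypothesis $n\ge 3$ matters here, because it guarantees that $v_{i-1}$ and $v_{i+1}$ are two distinct vertices, both different from $v_i$. Consequently the neighborhood sum $\sum_{u\in N(v_i)}y(u)$ equals $y_{i-1}+y_{i+1}$ with no double counting. Substituting into \eqref{eq:p2} gives the bound $2\deg(v_i)-2=2$. Thus (P2) reads: whenever $y_i=2$, we have $y_{i-1}+y_{i+1}\le 2$. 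This is exactly the implication stated in the corollary.

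Combining the two translations, Lemma~\ref{lem:deficit} yields the claimed equivalence. There is no real obstacle. The only point needing care is the remark that $n\ge 3$ makes $C_n$ simple and $2$-regular with two distinct neighbors per vertex. For $n=3$ the two neighbors of $v_i$ are adjacent to each other, but that does not affect the neighborhood sum.
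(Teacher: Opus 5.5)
Your proposal is correct and matches the paper's proof: the paper likewise notes that every vertex of $C_n$ has degree $2$, so $2\deg(v)-2=2$, and then invokes Lemma~\ref{lem:deficit}. Your extra remark is a detail the paper leaves implicit: for $n\ge 3$ the two neighbours $v_{i-1}$ and $v_{i+1}$ are distinct, so the neighbourhood sum is exactly $y_{i-1}+y_{i+1}$.
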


\begin{proof}
Every vertex of $C_n$ has degree $2$, so $2\deg(v)-2=2$ for all $v$. The claim follows from Lemma~\ref{lem:deficit}.
\end{proof}

\begin{remark}[Endpoint constraints for paths]
\label{rem:endpoints}
The endpoints of the path $P_n$ have degree $1$. Hence, by (P2), any deficit vector $y$ of an IDF on $P_n$ satisfies
\[
  y_1=2 \;\Longrightarrow\; y_2=0, \qquad y_n=2 \;\Longrightarrow\; y_{n-1}=0.
\]
These endpoint constraints are the \emph{only} structural difference between paths and cycles in the deficit formulation, since all internal vertices of $P_n$ and all vertices of $C_n$ have degree $2$. This observation explains both the coefficient-wise dominance $d_I(C_n,k)\ge d_I(P_n,k)$ (Remark~\ref{rem:dom}) and why the closed forms of Section~\ref{sec:closedform} are cleaner for cycles than for paths.
\end{remark}

\subsection{The unconstrained count: trinomial coefficients}

Ignoring the local capacity constraints (P2), the objects counted by (P1) are precisely the compositions of $j$ into $n$ parts, each part in $\{0,1,2\}$.

\begin{definition}[Compositions with parts at most two]
\label{def:trinom}
Let $n\ge 1$ and $0\le j\le 2n$. We denote by
\[
  T(n,j) \;=\; \bigl[z^j\bigr]\,(1+z+z^2)^n
  \;=\; \sum_{t=0}^{\lfloor j/2\rfloor} \binom{n}{t}\binom{n-t}{j-2t}
\]
the number of compositions of $j$ into $n$ parts, each part in $\{0,1,2\}$: choose $t$ parts equal to $2$ and $j-2t$ parts equal to $1$ among the remaining positions. $T(n,j)$ is a \emph{trinomial coefficient} (the coefficient of $z^j$ in $(1+z+z^2)^n$).
\end{definition}

\begin{remark}[Relation to classical partition theory]
\label{rem:fibonacci}
The number $T(n,j)$ belongs to the classical theory of restricted compositions and partitions \cite{andrews1998}: compositions of $j$ whose parts belong to $\{1,2\}$ (with an unrestricted number of parts) are counted by the Fibonacci number $F_{j+1}$, and $T(n,j)$ is the variant where the number of parts is fixed to exactly $n$ (zeros allowed). Summing over all $j$ gives $\sum_{j=0}^{2n} T(n,j)=3^n$, and $T(n,j)=T(n,2n-j)$ by the substitution $y\mapsto 2-y$ (equivalently $f\mapsto 2-f$). The Italian counting problem $d_I(G,2n-j)$ is thus the problem of counting \emph{admissible} compositions of this family, where the local capacity constraints (P2) prune a subset of the configurations. For fixed $j$, the trinomial coefficient $T(n,j)$ is a polynomial in $n$ of degree $j$ with leading coefficient $1/j!$.
\end{remark}

\begin{proposition}[Baseline and small-\(j\) exactness]
\label{prop:baseline}
Let $G$ be a graph with $n$ vertices. Then for every $j\ge 0$,
\[
  d_I(G, 2n-j) \;\le\; T(n,j) .
\]
Moreover, if $G$ has minimum degree at least $1$ (in particular, for $P_n$ with $n\ge 2$, for $C_n$, and for every tree with at least two vertices), then equality holds for $j=0,1,2$:
\[
  d_I(G,2n)=1, \qquad d_I(G,2n-1)=n, \qquad d_I(G,2n-2)=\binom{n}{2}+n .
\]
\end{proposition}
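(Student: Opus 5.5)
The plan is to work entirely in the deficit formulation of Lemma~\ref{lem:deficit}. The map $f\mapsto y=2-f$ is a bijection from $\{0,1,2\}^V$ to itself. By (P1), it sends IDFs of weight $2n-j$ injectively into the set of vectors $y\in\{0,1,2\}^V$ with $\sum_v y(v)=j$. By Definition~\ref{def:trinom}, that set has cardinality $T(n,j)$. This gives the upper bound $d_I(G,2n-j)\le T(n,j)$ immediately, since (P2) can only remove vectors.

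For the equality statement with $\delta(G)\ge 1$, the plan is to show that (P2) is vacuous whenever $j\le 2$, so that every composition counted by $T(n,j)$ is admissible.
\begin{itemize}
\item If $j\in\{0,1\}$, no coordinate can equal $2$, so (P2) imposes nothing.
\item If $j=2$, a vector with some $y(v)=2$ must vanish elsewhere. Then $\sum_{u\in N(v)}y(u)=0\le 2\deg(v)-2$, and this holds precisely because $\deg(v)\ge 1$.
\end{itemize}
This is the only place the minimum-degree hypothesis enters. An isolated vertex would violate (P2) here, in line with Proposition~\ref{prop:basic}(i).

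It then remains to evaluate the trinomial coefficients from the sum formula in Definition~\ref{def:trinom}:
\begin{itemize}
\item $T(n,0)=1$, which also agrees with Proposition~\ref{prop:basic}(iii);
\item $T(n,1)=\binom{n}{0}\binom{n}{1}=n$;
\item $T(n,2)=\binom{n}{0}\binom{n}{2}+\binom{n}{1}\binom{n-1}{0}=\binom n2+n$, corresponding to two coordinates equal to $1$ or one coordinate equal to $2$.
\end{itemize}

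Finally, I would note that the listed classes qualify: $P_n$ with $n\ge2$, $C_n$, and trees on at least two vertices are connected graphs with at least one edge, so $\delta\ge1$. There is no real obstacle; the only point needing care is the $j=2$ case, where one must check that the single vertex with deficit $2$ has all neighbours at deficit $0$, and hence receives $f$-value $2\deg(v)\ge 2$ from them.
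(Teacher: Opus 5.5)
Your proposal is correct and follows essentially the same route as the paper: the deficit bijection of Lemma~\ref{lem:deficit} gives $d_I(G,2n-j)\le T(n,j)$, and you then check that (P2) cannot bind for $j\le 2$ when the minimum degree is at least $1$. Your handling of $j=1$ (no coordinate can equal $2$, so the degree hypothesis is needed only for $j=2$) is actually more accurate than the paper's, which describes the $j=1$ configurations as having a single $2$-entry.
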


\begin{proof}
By Lemma~\ref{lem:deficit}, $d_I(G,2n-j)\le T(n,j)$, with equality iff no configuration counted by $T(n,j)$ violates (P2). For $j=0$ the only configuration is $y\equiv 0$, which satisfies (P2) vacuously. For $j=1$, the only configurations have a single $2$-entry; the neighbours of that entry contribute $0$ to the left-hand side of (P2), and $2\deg(v)-2\ge 0$ whenever $\deg(v)\ge 1$, so no violation occurs. For $j=2$, configurations have either a single $2$-entry or two $1$-entries; the former is handled as in the case $j=1$, and the latter imposes no condition. Hence equality holds for $j\le 2$ on every graph of minimum degree at least~$1$. (The isolated-vertex case $P_1$ is the single exception: $y(v)=2$ is excluded because $2\deg(v)-2=-2$; then $d_I(P_1,0)=0<1=T(1,2)$.)
\end{proof}

\begin{remark}
The deficit formulation also gives the counting identity
\begin{equation}
  d_I(C_n, 2n-j) \;=\; T(n,j) - E(n,j), \label{eq:tme}
\end{equation}
where $E(n,j)$ counts the compositions counted by $T(n,j)$ that violate the cycle constraint of Corollary~\ref{cor:cycle_deficit}. For $j\le 4$ no configuration is excluded ($E(n,j)=0$); for $j=5$ one finds
\begin{equation}
  E(n,5) \;=\; 2n \qquad (n\ge 4), \label{eq:e5}
\end{equation}
as shown in Section~\ref{sec:closedform}.
\end{remark}

\begin{lemma}[Polynomial structure of the excluded configurations]
\label{lem:poly}
Fix $j\ge 0$. Then $E(n,j)=0$ for $j\le 4$, and for every $j\ge 5$ the number $E(n,j)$ is a polynomial in $n$ of degree at most $j-4$ for all sufficiently large $n$. Consequently, for each fixed $j$ the coefficient $d_I(C_n,2n-j)=T(n,j)-E(n,j)$ is a polynomial in $n$ of degree $j$ with leading coefficient $1/j!$.
\end{lemma}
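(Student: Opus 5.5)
The plan is to describe the violating configurations through the runs of consecutive nonzero entries, and to show that each violating configuration has few runs.

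By Corollary~\ref{cor:cycle_deficit}, a composition $y\in\{0,1,2\}^n$ with $\sum_i y_i=j$ is excluded exactly when some index $i$ has $y_i=2$ and $y_{i-1}+y_{i+1}\ge 3$. Equivalently, some window $(y_{i-1},y_i,y_{i+1})$ of three cyclically consecutive entries is one of $(2,2,1)$, $(1,2,2)$, $(2,2,2)$. In particular such a window has sum at least $5$.

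\emph{Small $j$.} If $j\le 4$, no window can have sum $\ge 5$, so $E(n,j)=0$. This matches the remark preceding the lemma.

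\emph{Runs of an excluded configuration.} Fix $j\ge 5$ and assume $n>2j$. Then every configuration has at least one zero, so its support splits uniquely into maximal cyclic runs (blocks) of consecutive nonzero entries. Record the contents of the blocks as words $w_1,\dots,w_b$ over $\{1,2\}$, with $s=\sum|w_\ell|$ and $\sum_\ell \mathrm{wt}(w_\ell)=j$.

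\begin{itemize}
\item Whether $y$ is excluded depends only on these words. A violating window contains no zero, so it lies inside a single block, and its centre is not at a block end. Hence it is a condition internal to one word.
\item The block containing the violating window has weight at least $5$. Every other block has weight at least $1$. Therefore $b\le 1+(j-5)=j-4$.
\item For fixed $j$ there are only finitely many admissible word sequences, since each word has weight at most $j$.
\end{itemize}

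\emph{Counting placements.} Fix a linear sequence of words $(w_1,\dots,w_b)$, with $b\ge 1$ and at least one violating word. Count the cyclic placements of these blocks in the order given, each separated by at least one zero, together with a marked first block. Choose the starting vertex of $w_1$ in $n$ ways, then distribute the $n-s$ zeros into $b$ gaps of size at least $1$, in $\binom{n-s-1}{b-1}$ ways. This gives $n\binom{n-s-1}{b-1}$ pairs (configuration, marked block).

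Each excluded configuration with $b$ blocks arises exactly $b$ times when we range over all linear sequences: once for each choice of which block is marked first, reading the remaining blocks cyclically. Hence
\[
E(n,j)=\sum_{(w_1,\dots,w_b)}\frac{n}{b}\binom{n-s-1}{b-1},
\]
a finite sum in which each term is a polynomial in $n$ of degree $b\le j-4$. So $E(n,j)$ is a polynomial of degree at most $j-4$ for all $n>2j$.

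\emph{Conclusion.} By Remark~\ref{rem:fibonacci}, $T(n,j)$ is a polynomial in $n$ of degree $j$ with leading coefficient $1/j!$. Since $j-4<j$, the difference $d_I(C_n,2n-j)=T(n,j)-E(n,j)$ from \eqref{eq:tme} has the same degree and leading coefficient.

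\emph{Main obstacle.} The delicate step is the bookkeeping in the placement count. One must check two things.
\begin{itemize}
\item Marking one block per configuration with $b$ blocks gives exactly $b$ pairs, with no extra symmetry factor. Periodic configurations cause no problem, because the marking is of a block position, not of a rotation class.
\item The "sufficiently large $n$" hypothesis, here $n>2j$, rules out wrap-around degeneracies. These are the all-nonzero configuration and a violating window that meets itself around the cycle.
\end{itemize}
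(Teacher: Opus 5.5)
Your proof is correct, and it uses a different decomposition from the paper's.

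\textbf{The paper's route.} It splits an excluded configuration into maximal \emph{bad blocks} (maximal runs of positions covered by forbidden triples) plus the remaining nonzero positions. It then bounds the degree by $t+s\le j-4t$, where $t$ is the number of bad blocks. The placement step is only sketched: the wrap-around is ``absorbed by cutting the cycle,'' and the interaction between loose positions and bad blocks is left to unspecified separation constraints. That interaction is genuinely delicate. A loose entry next to a bad block need not merge into it: in $0,2,2,1,1,0$ the last $1$ is adjacent to the bad block $2,2,1$ but lies in no forbidden triple. Loose entries may also equal $2$.

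\textbf{Your route.} Splitting into maximal runs of \emph{nonzero} entries avoids all of this. The decomposition is canonical as soon as a zero exists. Exclusion becomes a property of a single word (it contains $221$, $122$ or $222$), and the only separation constraint is a gap of at least one zero. Your marked-block device then handles the cyclic symmetry exactly. It gives an explicit formula $E(n,j)=\sum\frac{n}{b}\binom{n-s-1}{b-1}$, not merely a degree bound. For $j=5$ it returns $n+n=2n$ from the words $221$ and $122$, in agreement with \eqref{eq:e5}. Since each term has leading coefficient $1/b!>0$ and $b=j-4$ is attained, it also shows the degree is exactly $j-4$.

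\textbf{Comparison.} The paper's version organizes the count by clusters of violations, but as written it needs the repairs above. For the lemma as stated, your argument is simpler and fully rigorous. One minor point: $n>j$ already forces a zero, so your threshold $n>2j$ is stronger than necessary.
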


\begin{proof}
A configuration counted by $E(n,j)$ contains at least one \emph{forbidden pattern}, i.e., a triple $(y_{i-1},y_i,y_{i+1})$ with $y_i=2$ and $y_{i-1}+y_{i+1}\ge 3$; the only such triples are $(2,2,1)$, $(1,2,2)$, and $(2,2,2)$ (indices modulo $n$), of total weight $5$, $5$, and $6$, respectively. In particular, $E(n,j)=0$ for $j\le 4$. For $j\ge 5$, decompose every excluded configuration into its maximal contiguous \emph{bad blocks}: a bad block is a maximal run of positions covered by forbidden patterns, together with its internal pattern structure. Each bad block has total weight at least $5$ and length at most $3\lfloor j/5\rfloor$, and for fixed $j$ there are finitely many possible block shapes; distinct bad blocks are separated by at least one position outside every forbidden pattern. An excluded configuration of total weight $j$ consists of $t\ge 1$ bad blocks of total weight $w\ge 5t$, plus $s=j-w\le j-5t$ further positions of weight $1$ that are not adjacent to any bad block (a position of weight $1$ adjacent to a block would merge into it by maximality); positions of weight $0$ are unrestricted. Fixing the $t$ block shapes, their cyclic order, and the separation constraints, the number of ways to place the blocks and the $s$ unit positions on the $n$ positions of the cycle is a polynomial in $n$ of degree $t+s\le j-4t\le j-4$: one chooses the starting position of the first block ($n$ choices) and then the positions of the remaining blocks and units subject to fixed separation constraints, a binomial-type count; the cyclic wrap-around is absorbed by the constant number of ways to cut the cycle. Summing over the finitely many shape vectors and gap patterns gives the claim. The polynomiality of $d_I(C_n,2n-j)$ and its leading coefficient follow from $d_I=T-E$ and Remark~\ref{rem:fibonacci}.
\end{proof}

\begin{remark}
By Lemma~\ref{lem:poly}, $d_I(C_n,2n-j)$ is a polynomial in $n$ of degree $j$ with leading coefficient $1/j!$ for every fixed $j$, a fact that is also confirmed numerically in Section~\ref{sec:numerics}.
\end{remark}

\section{Algorithm for Path Graphs}
\label{sec:path}

In this section, we develop a dynamic programming algorithm to compute all coefficients $d_I(P_n, k)$ for the path graph~$P_n$. From the viewpoint of Section~\ref{sec:partitions}, the algorithm enumerates exactly the admissible compositions of the deficit vectors; we formulate it directly on the original variables $x_i=f(v_i)$.

\subsection{Problem formulation}

Label the vertices of $P_n$ as $v_1, v_2, \ldots, v_n$ in path order. An Italian dominating function $f$ corresponds to a sequence $(x_1, x_2, \ldots, x_n)$ where $x_i = f(v_i) \in \{0, 1, 2\}$, satisfying:
\begin{enumerate}
  \item[(C1)] For $i=1$: if $x_1=0$, then $x_2 \ge 2$.
  \item[(C2)] For $2\le i\le n-1$: if $x_i=0$, then $x_{i-1}+x_{i+1}\ge 2$.
  \item[(C3)] For $i=n$: if $x_n=0$, then $x_{n-1}\ge 2$.
\end{enumerate}

The weight is $k = \sum_{i=1}^n x_i$, and $d_I(P_n, k)$ equals the number of valid sequences of weight~$k$.

\subsection{DP state definition}

The key observation is that when processing vertices left to right, the constraint on $x_i$ depends on $x_{i-1}$ (the previous value), and the constraint that $x_i$ imposes on $x_{i+1}$ depends on $x_i$ and $x_{i-1}$. We capture this with a \emph{propagated lower bound}~$c$:

\begin{definition}[State]
\label{def:pathstate}
The DP state at position $i$ is a triple $(\textit{prev},\, c,\, s)$ where:
\begin{itemize}
  \item $\textit{prev}\in\{0,1,2\}$: the value $x_{i-1}$ assigned to the previous vertex;
  \item $c\in\{0,1,2\}$: the lower-bound constraint on the current vertex $x_i$ (i.e., $x_i\ge c$);
  \item $s\in\{0,1,\ldots,2n\}$: the prefix sum $x_1+x_2+\cdots+x_{i-1}$.
\end{itemize}
The value $\mathrm{dp}[\textit{prev}][c][s]$ stores the number of valid partial assignments achieving this state.
\end{definition}

The constraint $c$ arises from the Italian domination condition: if $x_{i-1}=0$, then $x_{i-2}+x_i\ge 2$, so $x_i\ge 2 - x_{i-2}$. Since $x_{i-2}=\textit{prev}$ of the previous step, this is propagated as $c = \max(0,\, 2 - \textit{prev}_{\text{old}})$.

\subsection{Algorithm}

\begin{algorithm}[htbp]
\caption{Path graph Italian domination polynomial}
\label{alg:path}
\small
\begin{algorithmic}[1]
\Require Path length $n\ge 1$
\Ensure Coefficients $d_I(P_n, k)$ for $k=0,\ldots,2n$
\State $\textit{max\_sum}\gets 2n$
\If{$n=1$}
  \State \Return $\{1\mapsto 1,\; 2\mapsto 1\}$ \Comment{Isolated vertex: $f(v)=0$ is invalid}
\EndIf
\State Initialize $\mathrm{dp}$ as empty map
\For{$x_1\gets 0$ \textbf{to} $2$}
  \If{$x_1=0$}
    \State $c_1\gets 2$ \Comment{Next vertex must be $\ge 2$}
  \Else
    \State $c_1\gets 0$
  \EndIf
  \State $\mathrm{dp}[x_1][c_1][x_1]\gets \mathrm{dp}[x_1][c_1][x_1]+1$
\EndFor
\For{$i\gets 2$ \textbf{to} $n-1$}
  \State Initialize $\mathrm{dp\_new}$ as empty map
  \ForAll{states $(\textit{prev},\, c_{\textit{prev}},\, s)$ with $\mathrm{dp}[\textit{prev}][c_{\textit{prev}}][s]>0$}
    \For{$x_i\gets c_{\textit{prev}}$ \textbf{to} $2$}
      \State $s'\gets s+x_i$
      \If{$s'>\textit{max\_sum}$} \textbf{continue} \EndIf
      \If{$x_i=0$}
        \State $c_{\textit{new}}\gets \max(0,\, 2-\textit{prev})$
      \Else
        \State $c_{\textit{new}}\gets 0$
      \EndIf
      \State $\mathrm{dp\_new}[x_i][c_{\textit{new}}][s']\gets \mathrm{dp\_new}[x_i][c_{\textit{new}}][s']+\mathrm{dp}[\textit{prev}][c_{\textit{prev}}][s]$
    \EndFor
  \EndFor
  \State $\mathrm{dp}\gets \mathrm{dp\_new}$
\EndFor
\State Initialize $\textit{result}[k]\gets 0$ for all $k$
\ForAll{states $(\textit{prev},\, c_{\textit{prev}},\, s)$ with $\mathrm{dp}[\textit{prev}][c_{\textit{prev}}][s]>0$}
  \For{$x_n\gets c_{\textit{prev}}$ \textbf{to} $2$}
    \If{$x_n=0$ \textbf{and} $\textit{prev}<2$} \textbf{continue} \EndIf \Comment{Terminal constraint (C3)}
    \State $k\gets s+x_n$
    \State $\textit{result}[k]\gets \textit{result}[k]+\mathrm{dp}[\textit{prev}][c_{\textit{prev}}][s]$
  \EndFor
\EndFor
\State \Return $\textit{result}$
\end{algorithmic}
\end{algorithm}

\subsection{Correctness}

\begin{theorem}
\label{thm:pathcorrect}
Algorithm~\ref{alg:path} correctly computes $d_I(P_n, k)$ for all $k$.
\end{theorem}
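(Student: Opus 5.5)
The proof is an induction on $i$ with an explicit loop invariant: after the iterations for positions $2,\ldots,i$, the table $\mathrm{dp}$ counts exactly the valid prefixes $(x_1,\ldots,x_i)$. A bijective accounting of the terminal step then gives the theorem. The case $n=1$ is handled separately. By Proposition~\ref{prop:basic}(i), the isolated vertex cannot receive $0$, so $d_I(P_1,1)=d_I(P_1,2)=1$, which is exactly what the algorithm returns. From now on $n\ge 2$.

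One convention first: although Definition~\ref{def:pathstate} describes $s$ as the sum $x_1+\cdots+x_{i-1}$, the initialization $\mathrm{dp}[x_1][c_1][x_1]$ shows that the stored $s$ includes the current vertex. I will use that convention. Write $\mathrm{dp}^{(i)}$ for the table after position $i$ has been processed, with $i=1$ meaning the initialization. Set $x_0:=0$ as a phantom value.

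\emph{Invariant.} For $1\le i\le n-1$, $\mathrm{dp}^{(i)}[a][c][s]$ equals the number of sequences $(x_1,\ldots,x_i)\in\{0,1,2\}^i$ with the following properties:
\begin{itemize}
\item $x_i=a$ and $\sum_{t\le i}x_t=s$;
\item every condition among (C1), (C2) whose vertex lies in $\{1,\ldots,i-1\}$ holds;
\item $c=c(x_{i-1},x_i)$, where $c=\max(0,2-x_{i-1})$ if $x_i=0$ and $c=0$ otherwise.
\end{itemize}

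The crucial observation is as follows. Given the prefix, the domination condition at vertex $i$ involves only the one unknown $x_{i+1}$. It says $x_i>0$ or $x_{i-1}+x_{i+1}\ge 2$, which is equivalent to $x_{i+1}\ge c(x_{i-1},x_i)$. The phantom $x_0=0$ makes this formula reproduce (C1) at $i=1$, where $c_1=2$ exactly when $x_1=0$.

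\emph{Base and step.} The base case $i=1$ is immediate from the initialization loop: each $x_1$ contributes one sequence with the correct $(a,c,s)$, and no condition is yet required. For the inductive step $i-1\to i$ with $2\le i\le n-1$, consider the map that sends $(x_1,\ldots,x_i)$ to its prefix $(x_1,\ldots,x_{i-1})$ and the last value $x_i$. It is a bijection between the sequences counted by the invariant at $i$ and the pairs consisting of a prefix counted at $i-1$ and a value $x_i\ge c(x_{i-2},x_{i-1})$. The reason is that the only newly required condition, the one at vertex $i-1$, is by the observation exactly $x_i\ge c_{\textit{prev}}$. The algorithm iterates precisely over $x_i\in\{c_{\textit{prev}},\ldots,2\}$. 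It computes $c_{\textit{new}}=c(x_{i-1},x_i)$ correctly, because $\textit{prev}$ stores $x_{i-1}$. It also updates $s'=s+x_i$. The test $s'>\textit{max\_sum}$ never fires, since $s'\le 2i$. Summing over predecessor states therefore gives the invariant at $i$.

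\emph{Terminal step.} Two conditions remain once $x_n$ is chosen: the condition at vertex $n-1$ (which is (C2), or (C1) if $n=2$) and (C3). The first is equivalent to $x_n\ge c_{\textit{prev}}$. The second says that $x_n=0$ forces $x_{n-1}=\textit{prev}=2$. The final loop enforces exactly these two conditions and records the weight $k=s+x_n$. Hence $\textit{result}[k]$ counts the sequences satisfying all of (C1)--(C3) with weight $k$, which is $d_I(P_n,k)$. Each valid sequence is counted exactly once, because the state sequence is a deterministic function of $(x_1,\ldots,x_n)$.

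The main obstacle is the bookkeeping at the start and end of the path. One must check that the single formula $c(x_{i-1},x_i)$ correctly encodes (C1) via the phantom $x_0=0$, and that it interacts correctly with the extra test in the terminal step, including the degenerate case $n=2$, where the main loop is empty. The rest is a routine induction.
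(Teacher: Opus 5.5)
Your proof is correct and takes the same route as the paper. Both argue by induction over positions with an invariant saying each $\mathrm{dp}$ entry counts the valid prefixes with a given last value, propagated bound $c$, and prefix sum, and both finish with a terminal step that enforces the bound at vertex $n-1$ together with (C3). Your invariant is stated more carefully than the paper's: only the conditions at vertices $1,\ldots,i-1$ are treated as already verified, (C1) is absorbed via the phantom $x_0=0$, and $n=1,2$ are handled explicitly. The ``convention'' you flag is not a real discrepancy, though. The paper indexes the state before $x_i$ is chosen, so its $s=x_1+\cdots+x_{i-1}$ already includes $\textit{prev}=x_{i-1}$.
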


\begin{proof}
We prove by induction on the position $i$ that after processing positions $1,\ldots,i-1$, the value $\mathrm{dp}[\textit{prev}][c][s]$ equals the number of valid partial sequences $(x_1,\ldots,x_{i-1})$ satisfying:
\begin{itemize}
  \item Constraints (C1) for position~$1$ (if $i>1$) and (C2) for positions $2,\ldots,i-1$;
  \item $x_{i-1}=\textit{prev}$;
  \item $x_i\ge c$ is necessary and sufficient for satisfying the Italian domination constraint at position $i-1$ (if $x_{i-1}=0$) or no constraint (if $x_{i-1}>0$);
  \item $s = x_1+\cdots+x_{i-1}$.
\end{itemize}

\emph{Base case} ($i=2$): After initialization, for $x_1=0$ we set $c_1=2$, which is exactly the constraint $x_2\ge 2$ required by~(C1). For $x_1>0$, $c_1=0$ (no constraint on $x_2$). The prefix sum is $s=x_1$. So the invariant holds.

\emph{Inductive step} ($i\to i+1$): Consider a state $(\textit{prev}, c_{\textit{prev}}, s)$ at position~$i$. By the induction hypothesis, $c_{\textit{prev}}$ is the correct lower bound on $x_i$. We enumerate all valid $x_i\ge c_{\textit{prev}}$:
\begin{itemize}
  \item If $x_i=0$: the Italian domination constraint at position $i$ requires $x_{i-1}+x_{i+1}\ge 2$, i.e., $x_{i+1}\ge 2-\textit{prev}$. We set $c_{\textit{new}}=\max(0, 2-\textit{prev})$, which is the correct lower bound for~$x_{i+1}$.
  \item If $x_i>0$: no constraint is imposed on $x_{i+1}$ by position~$i$, so $c_{\textit{new}}=0$.
\end{itemize}
The new prefix sum $s'=s+x_i$ is correctly updated. This establishes the invariant for position~$i+1$.

\emph{Termination} ($i=n$): At the last position, the terminal constraint (C3) requires that if $x_n=0$, then $x_{n-1}\ge 2$, i.e., $\textit{prev}\ge 2$. This is checked in the final loop. The final weight is $k=s+x_n$. Summing over all valid states and choices of $x_n$ gives $d_I(P_n,k)$.

By induction, every valid sequence is counted exactly once and no invalid sequence is counted, proving the theorem.
\end{proof}

\subsection{Complexity}

\begin{theorem}
\label{thm:pathcomplex}
Algorithm~\ref{alg:path} runs in $O(n^2)$ time and $O(n)$ space.
\end{theorem}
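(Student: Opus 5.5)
The plan is to bound the number of live DP states in each layer and the work per state, then multiply by the number of layers. In Algorithm~\ref{alg:path}, a state is a triple $(\textit{prev},c,s)$. Here $\textit{prev}\in\{0,1,2\}$ and $c\in\{0,1,2\}$, and $s$ ranges over $\{0,\ldots,2n\}$ because of the cutoff $s'\le\textit{max\_sum}=2n$. Hence every map $\mathrm{dp}$ or $\mathrm{dp\_new}$ has at most $3\cdot 3\cdot(2n+1)=O(n)$ keys. In fact $c$ is determined by $\textit{prev}$ and the value before it, but the crude bound of $9$ possible $(\textit{prev},c)$ pairs is enough.

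\textbf{Time.} The initialization loop performs $3$ insertions. Each of the $n-2$ iterations of the main loop (for $i=2,\ldots,n-1$) scans at most $9(2n+1)$ states. For each state it tries at most $3$ values of $x_i$ and does $O(1)$ arithmetic and one map update. Storing $\mathrm{dp}$ as a $3\times 3\times(2n+1)$ array makes each update constant time. One iteration therefore costs $O(n)$, and the loop costs $O(n)\cdot O(n)=O(n^2)$. The final extraction loop scans $O(n)$ states with at most $3$ choices each, and initializing $\textit{result}$ takes $O(n)$. The total is $O(n^2)$.

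\textbf{Space.} Only two layers, $\mathrm{dp}$ and $\mathrm{dp\_new}$, are alive at any moment, each of size $O(n)$. Together with $\textit{result}$, of size $2n+1$, the space is $O(n)$.

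The one point needing care is the \emph{arithmetic cost model}. The counts can reach $3^n$, which is $\Theta(n)$ bits long. I would state the bound in the standard unit-cost RAM model, where each addition is $O(1)$, as the paper implicitly does. I would also remark in a footnote that under bit complexity each addition costs $O(n)$, so the time becomes $O(n^3)$ bit operations and the space $O(n^2)$ bits. I expect no real obstacle beyond stating this convention explicitly.
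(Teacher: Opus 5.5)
Your proposal is correct and follows the paper's proof: $O(n)$ states per layer ($9$ pairs $(\textit{prev},c)$ times $O(n)$ prefix sums), at most $3$ constant-time transitions per state, $n-2$ layers, and only two live layers plus the result array for space. Your remark on the cost model is a correct refinement that the paper leaves implicit: the bounds hold with unit-cost arithmetic, and since the counts reach $3^n$, they become $O(n^3)$ bit operations and $O(n^2)$ bits.
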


\begin{proof}
At each position $i$, the reachable states have $3$ choices for $\textit{prev}$, $3$ choices for~$c$, and at most $2(i-1)+1\le 2n+1$ choices for~$s$, giving $O(n)$ states per position. For each state, the inner loop enumerates at most $3$ values of~$x_i$, each transition costing $O(1)$. With $n-2$ middle positions plus initialization and termination, the total time is $O(n \cdot 3 \cdot n)=O(n^2)$.

Space: only two maps ($\mathrm{dp}$ and $\mathrm{dp\_new}$), each with $O(n)$ states, are kept simultaneously; the result array has $O(n)$ entries. Hence the space usage is $O(n)$.
\end{proof}

\begin{remark}
In deficit variables, the chain DP counts exactly the admissible compositions of Section~\ref{sec:partitions}: the propagated bound $c$ implements the endpoint constraint $y_1=2\Rightarrow y_2=0$ of Remark~\ref{rem:endpoints} at position $1$ (via $x_1=0\Rightarrow x_2\ge 2$, i.e., $y_2\le 0$), and the internal constraints implement (P2) for degree-$2$ vertices.
\end{remark}

\section{Algorithm for Trees}
\label{sec:tree}

We now extend the approach to general trees using a tree DP with a \emph{deficit} parameter.

\subsection{Overview}

Root the tree $T$ at an arbitrary vertex~$r$. Process vertices in post-order (children before parent). For each vertex~$v$, we compute a DP over its subtree that tracks:
\begin{itemize}
  \item $f_v\in\{0,1,2\}$: the value assigned to~$v$;
  \item $\delta\in\{0,1,2\}$: the \emph{deficit}, i.e., how much $v$ still needs from its parent to satisfy its own Italian domination constraint;
  \item $w$: the total weight in the subtree of~$v$.
\end{itemize}

The deficit is only nonzero when $f_v=0$. In that case, $\delta=\max(0,\, 2 - \sum_{c\in\mathrm{ch}(v)} f_c)$, where the sum is over the children of~$v$. The parent must supply at least $\delta$ to satisfy $v$'s constraint.

\subsection{State and transitions}

\begin{definition}[Tree DP state]
\label{def:treestate}
For each vertex $v$, the DP is a map
\[
  \mathrm{dp}_v(f_v,\, \delta,\, w) = \text{number of valid assignments in the subtree of } v
\]
where $f_v\in\{0,1,2\}$, $\delta\in\{0,1,2\}$, and $w\in\{0,\ldots,2\,|\text{subtree}(v)|\}$.
\end{definition}

\textbf{Leaf vertex.} If $v$ is a leaf (no children):
\begin{itemize}
  \item $f_v=0$: $\delta=2$ (needs $2$ from parent), $w=0$. Count: $\mathrm{dp}_v(0,2,0)=1$.
  \item $f_v=1$: $\delta=0$ (no need from parent), $w=1$. Count: $\mathrm{dp}_v(1,0,1)=1$.
  \item $f_v=2$: $\delta=0$, $w=2$. Count: $\mathrm{dp}_v(2,0,2)=1$.
\end{itemize}

\textbf{Internal vertex.} Let $v$ have children $c_1, c_2, \ldots, c_m$. We merge children one by one. Initialize a temporary map:
\[
  \textit{temp}(f_v,\, \delta_0,\, f_v) = 1, \qquad \delta_0 = \begin{cases} 2, & f_v=0, \\ 0, & f_v>0. \end{cases}
\]
For each child $c_j$ (with DP $\mathrm{dp}_{c_j}$), update:
\[
  \textit{temp}_{\textit{new}}(f_v,\, \delta',\, w') \;\mathrel{+}=\; \textit{temp}(f_v,\, \delta,\, w) \cdot \mathrm{dp}_{c_j}(f_c,\, \delta_c,\, w_c),
\]
where:
\begin{itemize}
  \item $f_c$ is the child's value, $\delta_c$ is the child's deficit, $w_c$ is the child's subtree weight;
  \item \textbf{Child satisfaction}: if $\delta_c > 0$, then $f_v \ge \delta_c$ is required (the parent must supply the deficit);
  \item $\delta' = \max(0,\, \delta - f_c)$ if $f_v=0$ (the child's value reduces $v$'s deficit), or $\delta'=0$ if $f_v>0$;
  \item $w' = w + w_c$.
\end{itemize}
After processing all children, set $\mathrm{dp}_v = \textit{temp}$.

\textbf{Root.} At the root $r$, there is no parent. Hence, only states with $\delta=0$ are valid:
\[
  d_I(T, k) = \sum_{f_r=0}^{2} \mathrm{dp}_r(f_r,\, 0,\, k).
\]

\subsection{Algorithm}

\begin{algorithm}[htbp]
\caption{Tree Italian domination polynomial}
\label{alg:tree}
\small
\begin{algorithmic}[1]
\Require Tree $T=(V,E)$ with $n$ vertices
\Ensure Coefficients $d_I(T, k)$ for $k=0,\ldots,2n$
\State Root $T$ at vertex~$r$; compute post-order $\pi$
\ForAll{$v$ in post-order $\pi$}
  \If{$v$ is a leaf}
    \State $\mathrm{dp}_v(0, 2, 0)\gets 1$
    \State $\mathrm{dp}_v(1, 0, 1)\gets 1$
    \State $\mathrm{dp}_v(2, 0, 2)\gets 1$
  \Else
    \State Initialize $\textit{temp}$: for each $f_v\in\{0,1,2\}$, set $\textit{temp}(f_v, \delta_0, f_v)\gets 1$ with $\delta_0=2$ if $f_v=0$, else $\delta_0=0$
    \ForAll{children $c$ of $v$}
      \State Initialize $\textit{temp\_new}$ as empty
      \ForAll{$(f_v, \delta, w)$ in $\textit{temp}$ with count $>0$}
        \ForAll{$(f_c, \delta_c, w_c)$ in $\mathrm{dp}_c$ with count $>0$}
          \If{$\delta_c>0$ \textbf{and} $f_v<\delta_c$} \textbf{continue} \EndIf
          \State $\delta'\gets \max(0, \delta - f_c)$ if $f_v=0$, else $0$
          \State $w'\gets w + w_c$
          \State $t \gets \textit{temp}(f_v,\delta,w)\cdot\mathrm{dp}_c(f_c,\delta_c,w_c)$
          \State $\textit{temp\_new}(f_v, \delta', w')\gets \textit{temp\_new}(f_v, \delta', w')+t$
        \EndFor
      \EndFor
      \State $\textit{temp}\gets\textit{temp\_new}$
    \EndFor
    \State $\mathrm{dp}_v\gets\textit{temp}$
  \EndIf
\EndFor
\State Initialize $\textit{result}[k]\gets 0$ for all $k$
\ForAll{$(f_r, 0, w)$ in $\mathrm{dp}_r$ with count $>0$}
  \State $\textit{result}[w]\mathrel{+}=\mathrm{dp}_r(f_r, 0, w)$
\EndFor
\State \Return $\textit{result}$
\end{algorithmic}
\end{algorithm}

\subsection{Correctness}

\begin{theorem}
\label{thm:treecorrect}
Algorithm~\ref{alg:tree} correctly computes $d_I(T, k)$ for all $k$.
\end{theorem}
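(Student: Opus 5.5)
The proof will be a structural induction over the post-order $\pi$, with the following invariant. For each processed vertex $v$, $\mathrm{dp}_v(a,\delta,w)$ equals the number of assignments $g\colon V(T_v)\to\{0,1,2\}$ on the subtree $T_v$ satisfying three conditions. First, $g(v)=a$ and $\sum_{u\in T_v} g(u)=w$. Second, every vertex $u\ne v$ of $T_v$ with $g(u)=0$ satisfies the Italian condition inside $T_v$; since all neighbours of such $u$ lie in $T_v$, this is its full condition in $T$. Third, $\delta=0$ if $a>0$, and $\delta=\max\bigl(0,\,2-\sum_{c\in\mathrm{ch}(v)}g(c)\bigr)$ if $a=0$. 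The key point to record is that, for $a=0$, the vertex $v$ is Italian-dominated in $T$ if and only if $g(\mathrm{parent}(v))\ge\delta$. This holds because $v$'s only neighbour outside $T_v$ is its parent, and $\delta\le 2$ always, so the test ``parent value $\ge\delta$'' is exactly the remaining requirement.

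For the base case, take $v$ a leaf. The three entries $\mathrm{dp}_v(0,2,0)$, $\mathrm{dp}_v(1,0,1)$ and $\mathrm{dp}_v(2,0,2)$ match the three assignments, with $\delta=2$ when $g(v)=0$ because $v$ has no children.

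For the inductive step, take $v$ with children $c_1,\dots,c_m$. I would prove a second, inner invariant by induction on $j$. After merging $c_1,\dots,c_j$, $\textit{temp}(a,\delta,w)$ counts the assignments on $\{v\}\cup T_{c_1}\cup\dots\cup T_{c_j}$ such that $g(v)=a$, the weight is $w$, and every zero-valued vertex other than $v$ in these subtrees is satisfied. In addition, $\delta=\max\bigl(0,\,2-\sum_{i\le j}g(c_i)\bigr)$ if $a=0$, and $\delta=0$ otherwise. The initialization is the case $j=0$. For the merge step, note that an assignment on the enlarged set is a pair: an assignment on the old set and one on $T_{c_{j+1}}$, with weights adding. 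By the outer hypothesis, the child's root $c_{j+1}$ is satisfied iff $a\ge\delta_c$ (vacuous when $\delta_c=0$), which is the filter in the algorithm. All other vertices of $T_{c_{j+1}}$ are already satisfied. The deficit updates correctly because $\max\bigl(0,\max(0,2-S)-f_c\bigr)=\max(0,2-S-f_c)$ for nonnegative $f_c$. Distinct pairs give distinct assignments and every valid assignment arises from exactly one pair, so the product-sum counts correctly. Taking $j=m$ yields the outer invariant for $v$.

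At the root $r$ there is no parent. The Italian condition for $r$ therefore holds iff $\delta=0$, since $\delta=0$ when $g(r)>0$ and otherwise means the children supply at least $2$. Summing $\mathrm{dp}_r(f_r,0,k)$ over $f_r$ then counts exactly the IDFs of $T$ of weight $k$. The only delicate point is the bookkeeping claim that each vertex's condition is checked exactly once. A non-root vertex $u$ with $g(u)=0$ is checked when $u$ is merged into its parent, using its children's sum together with its parent's value. The root is checked at the end. I expect no real obstacle beyond stating the invariant precisely, in particular keeping the ``satisfied'' requirement for all of $T_v$ except $v$ itself, and verifying the deficit composition identity.
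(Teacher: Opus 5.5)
Your proposal is correct and follows essentially the same structural induction as the paper's proof. It uses the same invariant (root value, subtree weight, every zero-valued vertex of $T_v$ other than $v$ satisfied, deficit $\max(0,2-\sum_{c}g(c))$), the same child-satisfaction filter $a\ge\delta_c$, the same identity for composing the running deficit, and the same restriction to $\delta=0$ at the root. Your explicit inner induction over the merged children and the pairing bijection simply make rigorous what the paper leaves informal.

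One small remark: the equivalence ``$v$ is dominated iff $g(\mathrm{parent}(v))\ge\delta$'' rests on $g(\mathrm{parent}(v))\ge 0$, which makes the truncation $\max(0,\cdot)$ harmless. It does not rest on $\delta\le 2$.
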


\begin{proof}
We prove by structural induction that $\mathrm{dp}_v(f_v, \delta, w)$ equals the number of valid partial assignments in the subtree of $v$ such that:
\begin{enumerate}
  \item[(I1)] Every vertex in the subtree of $v$, except possibly $v$ itself, satisfies the Italian domination constraint considering only neighbors within the subtree.
  \item[(I2)] $f(v) = f_v$.
  \item[(I3)] If $f_v = 0$, then $\delta = \max(0,\, 2 - \sum_{c\in\text{children}(v)} f(c))$, i.e., $\delta$ is the additional amount $v$ needs from its parent. If $f_v > 0$, then $\delta = 0$.
  \item[(I4)] $w = \sum_{u\in\text{subtree}(v)} f(u)$.
\end{enumerate}

\emph{Base case (leaf)}: A leaf $v$ has no children. If $f_v=0$, the only neighbor is the parent, so $\delta=2$ (needs $2$ from parent) and $w=0$. If $f_v>0$, no constraint is violated ($v$ itself is not zero), so $\delta=0$ and $w=f_v$. The invariant holds.

\emph{Inductive step (internal vertex)}: Let $v$ have children $c_1,\ldots,c_m$ with already-computed DPs. Consider the merge of child $c_j$ into the accumulated state $\textit{temp}$.

By the induction hypothesis, each state in $\mathrm{dp}_{c_j}$ satisfies (I1)--(I4) for the subtree of~$c_j$. The key points are:
\begin{itemize}
  \item \textbf{Child satisfaction}: When $\delta_c > 0$ (child $c_j$ needs $\delta_c$ from its parent $v$), we require $f_v \ge \delta_c$. This is exactly the condition that $v$'s value satisfies $c_j$'s deficit. If this holds, then $c_j$'s Italian domination constraint is fully satisfied (by its children within the subtree, plus the contribution $f_v$ from its parent).

  \item \textbf{Deficit update}: When $f_v = 0$, the deficit of $v$ is $\max(0, 2 - \sum_{c} f(c))$. Since we process children sequentially, the running deficit after child $c_j$ is $\max(0, \delta_{\text{before}} - f_{c_j})$. This is correct because $\max(0, 2 - a - b) = \max(0, \max(0, 2-a) - b)$ for $a, b\ge 0$ (proved by case analysis on the sign of $2-a-b$).

  \item \textbf{Weight accumulation}: $w' = w + w_c$ correctly accumulates the subtree weight.
\end{itemize}

After processing all children, $\mathrm{dp}_v$ satisfies the invariant.

\emph{Root}: At the root $r$, there is no parent, so the deficit must be~$0$. Any state with $\delta>0$ represents an assignment where $r$'s constraint is not fully satisfied and cannot be completed; these states are correctly excluded. The remaining states have all constraints satisfied, and summing by weight gives $d_I(T, k)$.

By structural induction, the claim follows.
\end{proof}

\subsection{Complexity}

\begin{theorem}
\label{thm:treecomplex}
Algorithm~\ref{alg:tree} runs in $O(n^2)$ time and $O(n)$ space for a tree with $n$ vertices.
\end{theorem}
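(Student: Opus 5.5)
The proof is the standard ``tree knapsack'' accounting for subtree-size-bounded merges, with the constant number of $(f,\delta)$ pairs absorbed into the $O(\cdot)$.

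\textbf{Step 1: table sizes.} For a vertex $v$, let $s_v=|\text{subtree}(v)|$. Since $f_v,\delta\in\{0,1,2\}$ and $0\le w\le 2s_v$, the table $\mathrm{dp}_v$ has at most $9(2s_v+1)=O(s_v)$ nonzero entries. The same bound holds for the partial table $\textit{temp}$ after $v$ has been merged with children $c_1,\ldots,c_j$, with $s_v$ replaced by the accumulated size $1+s_{c_1}+\cdots+s_{c_j}$.

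\textbf{Step 2: cost of one merge.} Merging child $c_j$ into $\textit{temp}$ takes time proportional to the product of the two table sizes, i.e.\ $O(A_{j-1}\cdot s_{c_j})$, where $A_{j-1}=1+\sum_{i<j}s_{c_i}$. Each pair of entries costs $O(1)$ work: one comparison, one $\max$, one addition and one multiplication. The arithmetic is counted in the unit-cost RAM model, as the paper implicitly does for the path algorithm.

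\textbf{Step 3: charging to vertex pairs.} We charge the merge cost to pairs of vertices: the term $A_{j-1}s_{c_j}$ counts pairs $(x,y)$ with $x$ in the already-merged part ($v$ or the subtrees of $c_1,\ldots,c_{j-1}$) and $y$ in $\text{subtree}(c_j)$. Each unordered pair $\{x,y\}$ of vertices of $T$ is charged exactly once, namely at their lowest common ancestor, during the merge of the child whose subtree contains the later-processed of the two. Summing over all merges, the total is therefore at most $\binom{n}{2}+n=O(n^2)$, which gives the time bound. This charging step is the main obstacle. The naive bound, $\sum_v \deg(v)\cdot n\cdot n$, would give $O(n^3)$, so one must make sure the temporary table is bounded by the accumulated size $A_{j-1}$ and not by $s_v$. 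The enumeration of the root states and the post-order computation add only $O(n)$.

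\textbf{Step 4: space.} Only $O(1)$ tables need to be live along the current path of the post-order. Since $\mathrm{dp}_c$ can be discarded once it is merged into its parent, the tables alive at any moment belong to disjoint subtrees: completed children waiting to be merged, plus the current $\textit{temp}$ tables of the ancestors. Their sizes therefore sum to $O(\sum s)=O(n)$, using the $O(s_v)$ bound from Step 1, while $\textit{temp\_new}$ is at most the size of the merged part. Adding the $O(n)$ result array and the $O(n)$ rooted-tree representation gives $O(n)$ space.

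The subtle point in this step is the disjointness claim. The pending $\textit{temp}$ of an ancestor covers only its already-merged children, and those are disjoint from the subtree currently being processed. I would state this as an invariant of the post-order traversal and verify it by induction on the traversal.
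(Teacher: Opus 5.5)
Your proposal is correct and follows the same route as the paper's proof. Both use tables of size $O(|\text{subtree}(v)|)$, bound each merge by the product of the accumulated and child table sizes, charge each pair of vertices once at its lowest common ancestor (the tree-knapsack argument) to get $O(n^2)$ time, and get $O(n)$ space from the disjointness of the subtrees whose tables are live. You are somewhat more explicit than the paper on two points the argument relies on: that \textit{temp} is bounded by the accumulated size $A_{j-1}$ rather than by $s_v$, and that $\mathrm{dp}_c$ is discarded once it has been merged.
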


\begin{proof}
The state at each vertex $v$ has $3$ choices for $f_v$, $3$ choices for $\delta$, and at most $2\,|\text{subtree}(v)|{+}1$ choices for~$w$, giving $9\cdot(2|\text{subtree}(v)|+1) = O(|\text{subtree}(v)|)$ states per vertex.

When merging child $c$ into the accumulated set at vertex~$v$, the work is proportional to $|\textit{temp}|\cdot|\mathrm{dp}_c|$. The accumulated weight range is bounded by $2$ times the number of vertices processed so far, and the child's weight range is bounded by $2|\text{subtree}(c)|$.

By the standard tree knapsack argument, each pair of vertices $(u_1, u_2)$ contributes to the merging cost at most once, namely at their lowest common ancestor. The per-pair cost is $O(1)$ (from the constant state dimension factors $3\times 3\times 3\times 3=81$), and the weight factors contribute an additional factor proportional to the subtree sizes. More precisely, the total work across all merges is:
\[
  \sum_{v\in V}\sum_{c\in\text{children}(v)} O(|\text{subtree}_{\text{acc}}|\cdot|\text{subtree}(c)|) = O(n^2),
\]
where $\text{subtree}_{\text{acc}}$ is the accumulated subtree size during the merge at~$v$. This bound follows from the fact that each pair of vertices is counted at most once across all merges (at their LCA).

Space: at any point during the post-order traversal, the live DPs belong to pairwise vertex-disjoint subtrees (the current vertex's accumulated map, plus the DPs of already-processed but not yet merged siblings along the root--current-vertex path). The sum of their sizes is therefore at most $O(n)$, giving $O(n)$ total space.
\end{proof}

\begin{remark}
For path graphs (which are trees), Algorithm~\ref{alg:tree} also applies, but Algorithm~\ref{alg:path} is more efficient in practice due to its simpler state structure (no deficit tracking needed, since the constraint is propagated linearly).
\end{remark}

\section{Algorithm for Cycle Graphs}
\label{sec:cycle}

We now develop a dynamic programming algorithm for cycle graphs $C_n$ ($n\ge 3$). The key challenge is the \emph{closing edge} $v_n v_1$, which creates a circular dependency: the constraint at $v_1$ involves $v_n$, and vice versa. We resolve this by \emph{boundary conditioning}: fixing the values of $v_1$ and $v_n$ and reducing the problem to a modified path DP. This is a standard conditioning technique, specialized here to the Italian constraint; in deficit variables it amounts to fixing the two boundary entries of the composition and running the path DP of Section~\ref{sec:path} on the remaining $n-2$ positions.

\subsection{Problem formulation}

Label the vertices of $C_n$ as $v_1, v_2, \ldots, v_n$ in cyclic order, with edges $v_i v_{i+1}$ for $i=1,\ldots,n-1$ and $v_n v_1$. An Italian dominating function $f$ corresponds to a sequence $(x_1, x_2, \ldots, x_n)$ where $x_i = f(v_i) \in \{0, 1, 2\}$, satisfying:
\begin{enumerate}
  \item[(D1)] For $i=1$: if $x_1=0$, then $x_2 + x_n \ge 2$.
  \item[(D2)] For $2\le i\le n-1$: if $x_i=0$, then $x_{i-1}+x_{i+1}\ge 2$.
  \item[(D3)] For $i=n$: if $x_n=0$, then $x_{n-1}+x_1\ge 2$.
\end{enumerate}

Note that constraints (D1) and (D3) differ from the path constraints (C1) and (C3): in the cycle, $v_1$ has two neighbors ($v_2$ and $v_n$), whereas in the path, $v_1$ has only one neighbor ($v_2$).

\subsection{Boundary conditioning}

The central idea is to \emph{condition on} the pair $(x_1, x_n) = (a, b)$ where $a, b \in \{0, 1, 2\}$. For each fixed pair, the remaining vertices $v_2, \ldots, v_{n-1}$ form a path with modified boundary constraints:

\begin{enumerate}
  \item \textbf{Left boundary} (from cycle constraint at $v_1$): If $a = 0$, then $x_2 + b \ge 2$, so $x_2 \ge \max(0, 2-b)$. If $a > 0$, no constraint on $x_2$ from~$v_1$.

  \item \textbf{Right boundary} (from cycle constraint at $v_n$): If $b = 0$, then $x_{n-1} + a \ge 2$, so $x_{n-1} \ge \max(0, 2-a)$. If $b > 0$, no constraint on $x_{n-1}$ from~$v_n$.

  \item \textbf{Middle constraints} (same as path): For $2\le i\le n-1$, if $x_i = 0$, then $x_{i-1} + x_{i+1} \ge 2$.
\end{enumerate}

We define:
\[
c_{\text{init}}(a, b) = \begin{cases} \max(0, 2-b), & \text{if } a = 0, \\ 0, & \text{if } a > 0, \end{cases}
\]
as the initial lower-bound constraint on $x_2$, and
\[
c_{\text{cycle}}(a, b) = \begin{cases} \max(0, 2-a), & \text{if } b = 0, \\ 0, & \text{if } b > 0, \end{cases}
\]
as the additional lower-bound constraint on $x_{n-1}$ from the cycle.

\subsection{DP state and transitions}

For each fixed pair $(a, b)$, we run a path DP on $v_2, \ldots, v_{n-1}$ with the following modifications:

\textbf{Initialization} (at $v_2$, with $\textit{prev} = a$):
\begin{itemize}
  \item Enumerate $x_2 \ge c_{\text{init}}(a, b)$.
  \item If $x_2 = 0$: set $c_{\text{new}} = \max(0, 2-a)$ (path constraint at $v_2$: $x_3 \ge 2 - x_1 = 2-a$).
  \item If $x_2 > 0$: set $c_{\text{new}} = 0$.
  \item Prefix sum: $s = a + x_2$.
\end{itemize}

\textbf{Middle positions} ($v_3, \ldots, v_{n-2}$): Same as the path DP; propagate the constraint $c$ based on the current and previous values.

\textbf{Terminal} (at $v_{n-1}$, with $\textit{prev} = x_{n-2}$):
\begin{itemize}
  \item Enumerate $x_{n-1} \ge \max(c_{\text{prev}},\, c_{\text{cycle}}(a, b))$.
  \item \textbf{Path constraint at $v_{n-1}$}: If $x_{n-1} = 0$, then $x_{n-2} + x_n \ge 2$, i.e., $\textit{prev} + b \ge 2$. This is equivalent to $b \ge \max(0, 2-\textit{prev})$. If this fails, skip.
  \item Weight: $k = s + x_{n-1} + b$.
\end{itemize}

The cycle polynomial is then:
\[
  D_I(C_n, x) = \sum_{a=0}^{2} \sum_{b=0}^{2} x^{a+b} \cdot Q_{a,b}(n, x),
\]
where $Q_{a,b}(n, x)$ is the generating polynomial for valid middle assignments given the boundary pair $(a, b)$.

\subsection{Algorithm}

\begin{algorithm}[htbp]
\caption{Cycle graph Italian domination polynomial}
\label{alg:cycle}
\small
\begin{algorithmic}[1]
\Require Cycle length $n\ge 3$
\Ensure Coefficients $d_I(C_n, k)$ for $k=0,\ldots,2n$
\State $\textit{max\_sum}\gets 2n$; initialize $\textit{result}[k]\gets 0$ for all $k$
\For{$a\gets 0$ \textbf{to} $2$}
  \For{$b\gets 0$ \textbf{to} $2$}
    \State $c_{\text{init}}\gets \max(0, 2-b)$ if $a=0$, else $0$
    \State $c_{\text{cycle}}\gets \max(0, 2-a)$ if $b=0$, else $0$
    \If{$n=3$}
      \Comment{Single middle vertex $v_2$}
      \For{$x_2\gets \max(c_{\text{init}}, c_{\text{cycle}})$ \textbf{to} $2$}
        \If{$x_2=0$ \textbf{and} $a+b<2$} \textbf{continue} \EndIf
        \State $k\gets a+x_2+b$;\quad $\textit{result}[k]\mathrel{+}=\!1$
      \EndFor
      \State \textbf{continue}
    \EndIf
    \State Initialize $\mathrm{dp}$ as empty map
    \For{$x_2\gets c_{\text{init}}$ \textbf{to} $2$}
      \State $c_{\text{new}}\gets \max(0,2-a)$ if $x_2=0$, else $0$
      \State $\mathrm{dp}[x_2][c_{\text{new}}][a\!+\!x_2]\mathrel{+}=\!1$
    \EndFor
    \For{$i\gets 3$ \textbf{to} $n-2$}
      \State Initialize $\mathrm{dp\_new}$ as empty map
      \ForAll{$(\textit{prev}, c_{\textit{prev}}, s)$ in $\mathrm{dp}$}
        \For{$x_i\gets c_{\textit{prev}}$ \textbf{to} $2$}
          \State $s'\gets s+x_i$
          \If{$s'>\textit{max\_sum}$} \textbf{continue} \EndIf
          \State $c_{\text{new}}\gets \max(0,2\!-\!\textit{prev})$ if $x_i=0$, else $0$
          \State $t \gets \mathrm{dp}[\textit{prev}][c_{\textit{prev}}][s]$
          \State $\mathrm{dp\_new}[x_i][c_{\text{new}}][s']\gets \mathrm{dp\_new}[x_i][c_{\text{new}}][s']+t$
        \EndFor
      \EndFor
      \State $\mathrm{dp}\gets \mathrm{dp\_new}$
    \EndFor
    \ForAll{$(\textit{prev}, c_{\textit{prev}}, s)$ in $\mathrm{dp}$}
      \For{$x_{n-1}\gets \max(c_{\textit{prev}}, c_{\text{cycle}})$ \textbf{to} $2$}
        \If{$x_{n-1}=0$ \textbf{and} $b<\max(0, 2-\textit{prev})$} \textbf{continue} \EndIf
        \State $k\gets s+x_{n-1}+b$
        \State $\textit{result}[k]\mathrel{+}=\mathrm{dp}[\textit{prev}][c_{\textit{prev}}][s]$
      \EndFor
    \EndFor
  \EndFor
\EndFor
\State \Return $\textit{result}$
\end{algorithmic}
\end{algorithm}

\subsection{Correctness}

\begin{theorem}
\label{thm:cyclecorrect}
Algorithm~\ref{alg:cycle} correctly computes $d_I(C_n, k)$ for all $k$.
\end{theorem}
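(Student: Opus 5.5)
The plan is to partition the set of all IDFs of $C_n$ according to the boundary pair $(x_1,x_n)=(a,b)\in\{0,1,2\}^2$. For each pair, I will show that the inner computation of Algorithm~\ref{alg:cycle} counts, by weight, exactly the sequences $(x_2,\ldots,x_{n-1})$ for which $(a,x_2,\ldots,x_{n-1},b)$ satisfies (D1)--(D3). Since the nine classes are disjoint and cover all sequences, summing over $(a,b)$ gives $d_I(C_n,k)$. The first step is therefore to check that (D1)--(D3) translate correctly into the boundary bounds. For $a=0$, (D1) is equivalent to $x_2\ge \max(0,2-b)=c_{\text{init}}(a,b)$, and for $a>0$ it is vacuous. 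Symmetrically, (D3) is equivalent to $x_{n-1}\ge c_{\text{cycle}}(a,b)$.

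The case $n=3$ is handled directly, since $v_2$ is the only middle vertex. The neighbours of $v_1$ are $v_2,v_3$, so (D1) becomes $x_2\ge c_{\text{init}}$. Likewise (D3) becomes $x_2\ge c_{\text{cycle}}$, and (D2) at $v_2$ reads ``$x_2=0\Rightarrow a+b\ge2$.'' These are exactly the checks in the $n=3$ branch, each admissible $x_2$ contributes $1$ at weight $a+x_2+b$, and so this case is done.

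For $n\ge4$, I will reuse the invariant from the proof of Theorem~\ref{thm:pathcorrect}, shifted to positions $2,\ldots,n-2$. After position $i$ is processed, $\mathrm{dp}[\textit{prev}][c][s]$ should count the partial sequences $(x_2,\ldots,x_i)$ that satisfy four conditions:
\begin{itemize}
\item $x_2\ge c_{\text{init}}$;
\item (D2) holds at $v_2,\ldots,v_{i-1}$;
\item $x_i=\textit{prev}$, and $c$ is the exact lower bound on $x_{i+1}$ forced by (D2) at $v_i$;
\item $s=a+x_2+\cdots+x_i$.
\end{itemize}
The base case $i=2$ uses $\textit{prev}=x_1=a$: if $x_2=0$, the bound on $x_3$ is $\max(0,2-a)$, as initialized. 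The inductive step is word-for-word the path argument, because (D2) at interior positions coincides with (C2). When $n=4$ the middle loop is empty, and the invariant holds at $i=2=n-2$ directly.

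At termination I enumerate $x_{n-1}$ and verify the remaining constraints:
\begin{itemize}
\item (D2) at $v_{n-2}$ is enforced by $x_{n-1}\ge c_{\textit{prev}}$;
\item (D3) is enforced by $x_{n-1}\ge c_{\text{cycle}}$;
\item (D2) at $v_{n-1}$, whose neighbours are $v_{n-2}$ and $v_n$, requires $\textit{prev}+b\ge2$ when $x_{n-1}=0$, which is the skip test;
\item (D1) is covered by the initialization.
\end{itemize}
The weight is $s+x_{n-1}+b$, and every constraint (D1)--(D3) has been checked exactly once. The step I expect to need the most care is this bookkeeping at the two seams, where $v_2$ and $v_{n-1}$ each have one neighbour among the fixed boundary values. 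I must confirm that $\textit{prev}=a$ is used correctly at $v_2$ and $b$ correctly at $v_{n-1}$, and that the small cases $n=3,4$ neither double-impose nor omit a constraint. Finally, each valid cycle sequence determines a unique pair $(a,b)$ and a unique DP path, so it is counted exactly once, which gives the theorem.
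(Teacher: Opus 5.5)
Your proposal is correct and follows essentially the same route as the paper's proof. Both partition the IDFs by the boundary pair $(a,b)=(x_1,x_n)$, translate (D1) and (D3) into the bounds $c_{\text{init}}$ and $c_{\text{cycle}}$, handle $n=3$ directly, and reuse the path-DP invariant, with the terminal step enforcing (D2) at $v_{n-2}$ and $v_{n-1}$ together with (D3). Your explicit shifted invariant and your remark that the middle loop is empty for $n=4$ spell out what the paper leaves to ``the same induction argument as Theorem~\ref{thm:pathcorrect}.''
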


\begin{proof}
The cycle $C_n$ has vertex set $\{v_1, \ldots, v_n\}$ and edge set $\{v_i v_{i+1} : 1 \le i \le n-1\} \cup \{v_n v_1\}$. We partition all Italian dominating functions on $C_n$ by the pair $(f(v_1), f(v_n)) = (a, b)$. Since the pairs are disjoint and exhaustive, it suffices to show that for each fixed $(a, b)$, the algorithm correctly counts the number of valid assignments to $v_2, \ldots, v_{n-1}$.

Fix $(a, b)$. The constraints on the middle sequence $(x_2, \ldots, x_{n-1})$ are:
\begin{enumerate}
  \item[(E1)] \emph{Cycle constraint at $v_1$}: If $a=0$, then $x_2 + b \ge 2$, i.e., $x_2 \ge \max(0, 2-b) = c_{\text{init}}$.
  \item[(E2)] \emph{Path constraint at $v_2$}: If $x_2=0$, then $a + x_3 \ge 2$, i.e., $x_3 \ge \max(0, 2-a)$. This is propagated as $c_{\text{new}} = \max(0, 2-a)$ when $x_2 = 0$.
  \item[(E3)] \emph{Path constraints at $v_i$} ($3 \le i \le n-2$): If $x_i=0$, then $x_{i-1}+x_{i+1} \ge 2$. This is the standard path constraint, propagated as $c_{\text{new}} = \max(0, 2-\textit{prev})$.
  \item[(E4)] \emph{Path constraint at $v_{n-1}$}: If $x_{n-1}=0$, then $\textit{prev} + b \ge 2$, i.e., $b \ge \max(0, 2-\textit{prev})$. This is checked at the terminal.
  \item[(E5)] \emph{Cycle constraint at $v_n$}: If $b=0$, then $x_{n-1} + a \ge 2$, i.e., $x_{n-1} \ge \max(0, 2-a) = c_{\text{cycle}}$.
\end{enumerate}

The initialization enforces (E1) by requiring $x_2 \ge c_{\text{init}}$, and correctly sets the propagated constraint for $v_3$ based on (E2). The middle loop propagates constraints (E3) exactly as in the path DP (Theorem~\ref{thm:pathcorrect}). The terminal enforces both (E4) and (E5): it requires $x_{n-1} \ge \max(c_{\text{prev}}, c_{\text{cycle}})$ (combining the propagated constraint from the path and the cycle constraint (E5)), and checks the path constraint (E4) when $x_{n-1} = 0$.

The $n=3$ special case handles the situation where $v_2$ is the only middle vertex: both $c_{\text{init}}$ and $c_{\text{cycle}}$ apply to $x_2$, and the path constraint at $v_2$ becomes $a + b \ge 2$ (since both neighbors are fixed).

By the same induction argument as Theorem~\ref{thm:pathcorrect}, the DP correctly counts valid partial assignments at each position. Summing over all $9$ pairs $(a, b)$ gives $d_I(C_n, k)$.
\end{proof}

\subsection{Complexity}

\begin{theorem}
\label{thm:cyclecomplex}
Algorithm~\ref{alg:cycle} runs in $O(n^2)$ time and $O(n)$ space.
\end{theorem}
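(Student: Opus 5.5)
The plan is to mirror the complexity analysis of Theorem~\ref{thm:pathcomplex}, treating Algorithm~\ref{alg:cycle} as nine independent runs of a modified path DP, one for each boundary pair $(a,b)\in\{0,1,2\}^2$, followed by a constant-size terminal step. Each run will be bounded by $O(n^2)$ time and $O(n)$ space. Since the number of runs is the absolute constant $9$, the bounds for the whole algorithm follow. The case $n=3$ is handled by the special branch, which does $O(1)$ work per pair, so we may assume $n\ge 4$.

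First, I will bound the number of live states for a fixed pair $(a,b)$. After initialization and after each middle position $i$, the map $\mathrm{dp}$ is indexed by $(\textit{prev},c,s)$ with $\textit{prev},c\in\{0,1,2\}$. The prefix sum satisfies $0\le s\le 2i$ because $s=a+x_2+\cdots+x_i$, and in any case $s\le \textit{max\_sum}=2n$ is enforced explicitly. Hence at most $9(2n+1)=O(n)$ states are live.

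Next, I will count transitions. Each state spawns at most three values of $x_i$, and each transition does $O(1)$ work: an addition, a max, and a map update. I will assume the map is implemented as a dense $3\times3\times(2n+1)$ array, or a hash map with $O(1)$ expected access. Under that assumption, one middle position costs $O(n)$. There are $n-4$ middle positions ($i=3,\ldots,n-2$), plus initialization ($O(1)$) and the terminal loop ($O(n)$ states times $3$ choices). This gives $O(n^2)$ per pair and $9\cdot O(n^2)=O(n^2)$ in total.

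For space, only $\mathrm{dp}$ and $\mathrm{dp\_new}$ are kept at any time, each of size $O(n)$. The result array has $2n+1$ entries and is shared across all pairs. Each pair's DP is discarded before the next one begins, so the total space is $O(n)$. The only point needing care, rather than a real obstacle, is the cost model. We count arithmetic operations as unit cost, exactly as in Theorem~\ref{thm:pathcomplex}; the counts can grow to $3^n$, i.e. $O(n)$ bits, and under a bit-complexity model the bounds would carry an extra factor of $n$. I will state this convention explicitly and otherwise defer to the argument of Theorem~\ref{thm:pathcomplex}.
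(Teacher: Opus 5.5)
Your proposal is correct and follows essentially the same argument as the paper: nine sequential boundary-conditioned path DPs, each with $O(n)$ live states per position and at most three $O(1)$ transitions per state, giving $O(n^2)$ time and $O(n)$ space. Your additional remarks on the $n=3$ branch, the map implementation, and the unit-cost arithmetic convention (counts of $O(n)$ bits) are sound refinements that the paper leaves implicit.
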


\begin{proof}
The outer loop iterates over $9$ pairs $(a, b)$. For each pair, the inner path DP on $n-2$ vertices has the same structure as Algorithm~\ref{alg:path}: the state space is $O(n)$ (3 choices for $\textit{prev}$, 3 for $c$, $O(n)$ for $s$), and each state transitions to at most $3$ next states. The total work per pair is $O(n^2)$, giving $O(9n^2) = O(n^2)$ total.

Space: the path DP maintains two maps of size $O(n)$ per pair, and the outer loop is sequential, so the total space is $O(n)$.
\end{proof}

\section{Known Values, Dominance, and Closed Forms for Cycles}
\label{sec:known}

\subsection{Italian domination number of cycles}

As stated in the Preliminaries (Proposition~\ref{prop:gamma_cycle}), $\gamma_I(C_n)=\lceil n/2\rceil$ is a known result, recorded in \cite{ferrari2025}; we do not reproduce its proof here.

\subsection{Relationship between cycle and path polynomials}

\begin{remark}
\label{rem:dom}
For all $n \ge 3$ and all $k \ge 0$, $d_I(C_n, k) \ge d_I(P_n, k)$, i.e., $D_I(C_n, x) \ge D_I(P_n, x)$ coefficient-wise. Indeed, every IDF on $P_n$ is also an IDF on $C_n$, because the cycle is obtained from the path by adding the edge $v_n v_1$, which only enlarges the neighborhoods of $v_1$ and $v_n$; since the Italian condition constrains only the sum of $f$ over the neighborhood, enlarging a neighborhood can never invalidate a previously valid function. Equivalently, in the deficit formulation of Section~\ref{sec:partitions}, the cycle has no endpoint constraints of the form $y_1=2\Rightarrow y_2=0$ (Remark~\ref{rem:endpoints}), so every admissible path composition is admissible on the cycle. The numerical tables in Section~\ref{sec:numerics} confirm the inequality for all tested values.
\end{remark}

\subsection{Closed-form expressions for high weights}
\label{sec:closedform}

We derive closed-form expressions for $d_I(C_n, 2n - j)$ for small values of $j$. By Corollary~\ref{cor:cycle_deficit}, $d_I(C_n,2n-j)$ equals the number of vectors $y\in\{0,1,2\}^n$ with $\sum_i y_i=j$ such that $y_i=2$ implies $y_{i-1}+y_{i+1}\le 2$. By the exclusion identity \eqref{eq:tme}, this equals $T(n,j)-E(n,j)$, where $T(n,j)=[z^j](1+z+z^2)^n$ is the trinomial coefficient of Definition~\ref{def:trinom} and $E(n,j)$ counts the excluded configurations; we enumerate $T(n,j)$ and $E(n,j)$ for $j\le 5$.

\begin{theorem}
\label{thm:closedform}
For $n \ge 3$:
\begin{enumerate}
\item[(i)] $d_I(C_n, 2n) = 1$.
\item[(ii)] $d_I(C_n, 2n-1) = n$.
\item[(iii)] $d_I(C_n, 2n-2) = \dfrac{n(n+1)}{2}$.
\item[(iv)] $d_I(C_n, 2n-3) = \dfrac{n(n-1)(n+4)}{6}$.
\item[(v)] $d_I(C_n, 2n-4) = \dfrac{n(n-1)(n^2+7n-6)}{24}$.
\end{enumerate}
For $n \ge 4$:
\begin{enumerate}
\item[(vi)] $d_I(C_n, 2n-5) = \dfrac{n(n^4+10n^3-25n^2-10n-216)}{120}$.
\end{enumerate}
\end{theorem}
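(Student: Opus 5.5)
The plan is to work entirely in the deficit formulation. By Corollary~\ref{cor:cycle_deficit} and the exclusion identity \eqref{eq:tme}, $d_I(C_n,2n-j)=T(n,j)-E(n,j)$, where $T(n,j)=\sum_{t}\binom{n}{t}\binom{n-t}{j-2t}$ (Definition~\ref{def:trinom}) and $E(n,j)$ counts the vectors $y\in\{0,1,2\}^n$ of sum $j$ that violate the cycle constraint. So each item reduces to two tasks: evaluate the trinomial coefficient as a polynomial in $n$, and determine $E(n,j)$.

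For (i)--(v), Lemma~\ref{lem:poly} gives $E(n,j)=0$ when $j\le 4$. The reason is that a forbidden triple $(y_{i-1},y_i,y_{i+1})$ with $y_i=2$ and $y_{i-1}+y_{i+1}\ge 3$ has weight at least $5$. For $n=3$ the two neighbours of $v_i$ are distinct, so this still applies. Items (i)--(ii) are then also immediate from Proposition~\ref{prop:baseline}. For the rest I will expand $T(n,j)$ directly:
\begin{itemize}
  \item $T(n,2)=\binom n2+n=\tfrac{n(n+1)}2$;
  \item $T(n,3)=\binom n3+n(n-1)=\tfrac{n(n-1)(n+4)}6$;
  \item $T(n,4)=\binom n4+n\binom{n-1}{2}+\binom n2$.
\end{itemize}
Putting the last one over the denominator $24$ and factoring out $n(n-1)$ should yield $n(n-1)(n^2+7n-6)/24$. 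These are routine polynomial identities, and I would double-check each at one or two small values of $n$.

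For (vi), $T(n,5)=\binom n5+n\binom{n-1}{3}+\binom n2(n-2)$, corresponding to $t=0,1,2$ twos. The substantive step is establishing \eqref{eq:e5}, $E(n,5)=2n$ for $n\ge4$.

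\begin{itemize}
  \item \emph{Shape of a violation.} A violating vector of weight $5$ must contain a forbidden triple. The triple $(2,2,2)$ has weight $6$ and is too heavy, so the triple is $(2,2,1)$ or $(1,2,2)$ on three cyclically consecutive positions. That triple already uses all the weight, so every other entry is $0$.
  \item \emph{Counting.} Conversely, each such placement does violate the constraint. There are $n$ starting positions and two orientations, giving $2n$ placements.
  \item \emph{No double counting.} For $n\ge4$ a vector is counted only once. Its support is three consecutive positions, which determines the starting position, and the position of the $1$ within that window determines the orientation. The cyclic word $2,2,1,0,\dots$ is not a rotation of $1,2,2,0,\dots$ once a $0$ is present.
\end{itemize}
This is exactly where $n\ge4$ is needed. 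For $n=3$ the two orientations coincide up to rotation, so $E(3,5)=3$, not $6$.

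Finally I will expand $T(n,5)-2n$ over the denominator $120$ and check that it equals $n(n^4+10n^3-25n^2-10n-216)/120$. The expected main obstacle is not conceptual but bookkeeping. One part is making sure the argument for $E(n,5)$ rules out every other weight-$5$ violation, for instance a $2$ whose two neighbours are $2$ and $1$ arranged non-consecutively, which cannot happen on a cycle since the neighbours of $v_i$ are $v_{i\pm1}$. The other part is the degree-$5$ polynomial algebra. As a safeguard I would compare the resulting formulas against the numerical cycle tables (e.g.\ $n=4,5$).
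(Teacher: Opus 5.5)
Your proposal is correct and follows the same route as the paper. You write $d_I(C_n,2n-j)=T(n,j)-E(n,j)$ and get $E(n,j)=0$ for $j\le 4$ from the weight-$5$ lower bound on forbidden triples, which is equivalent to the paper's case-by-case neighbour-sum check; you then show $E(n,5)=2n$ via the triples $(2,2,1)$ and $(1,2,2)$ and expand. Your explicit check that these $2n$ placements are distinct for $n\ge 4$, and collapse to $E(3,5)=3$ for $n=3$, makes precise a point the paper leaves implicit.
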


\begin{proof}
Items (i)--(iii) also follow from Proposition~\ref{prop:baseline} applied to $C_n$ (which has minimum degree $2$ for $n\ge 3$); the exclusion identity \eqref{eq:tme} organizes the derivations for all $j$. Throughout, indices are taken modulo $n$, and we count admissible deficit vectors $y$ (Corollary~\ref{cor:cycle_deficit}).

\smallskip
\noindent\textit{(i) $j=0$.} $T(n,0)=1$ (all entries $0$), and $E(n,0)=0$.

\smallskip
\noindent\textit{(ii) $j=1$.} $T(n,1)=n$ (a single entry equal to $2$). Such a vector has $y$-neighbors all equal to $0$, so the constraint $y_{i-1}+y_{i+1}\le 2$ holds; $E(n,1)=0$.

\smallskip
\noindent\textit{(iii) $j=2$.} $T(n,2)=\binom{n}{2}+n$ (two entries equal to $1$, or one entry equal to $2$). No constraint is violated: a $2$-entry has neighbors summing to $0$, and $1$-entries are unconstrained. Hence $E(n,2)=0$.

\smallskip
\noindent\textit{(iv) $j=3$.} $T(n,3)=n(n-1)+\binom{n}{3}$ (one $2$-entry and one $1$-entry, or three $1$-entries). A $2$-entry has neighbors that are $0$ or $1$, summing to at most $1\le 2$; hence $E(n,3)=0$. So
\[
  d_I(C_n,2n-3)=n(n-1)+\binom{n}{3}=\frac{n(n-1)(n+4)}{6}.
\]

\smallskip
\noindent\textit{(v) $j=4$.} $T(n,4)=\binom{n}{2}+n\binom{n-1}{2}+\binom{n}{4}$ (two $2$-entries; one $2$-entry and two $1$-entries; four $1$-entries). A $2$-entry has neighbors summing to at most $2$: if the other $2$-entry is adjacent, the neighbors of each $2$-entry are $\{2,0\}$, summing to $2\le 2$; otherwise they sum to at most $1$. Hence $E(n,4)=0$, and
\[
  d_I(C_n,2n-4)=\binom{n}{2}+n\binom{n-1}{2}+\binom{n}{4}=\frac{n(n-1)(n^2+7n-6)}{24}.
\]

\smallskip
\noindent\textit{(vi) $j=5$ ($n\ge 4$).} $T(n,5)=\binom{n}{2}(n-2)+n\binom{n-1}{3}+\binom{n}{5}$ (two $2$-entries and one $1$-entry; one $2$-entry and three $1$-entries; five $1$-entries). Only the first family can violate the constraint. A vector with two $2$-entries $y_i=y_{i+1}=2$ and one $1$-entry is invalid exactly when the $1$-entry lies at $i-1$ or $i+2$, since then one of the $2$-entries has neighbor sum $1+2=3>2$; these are precisely the configurations where a $2$-entry is adjacent to both a $2$-entry and a $1$-entry. There are $n$ choices for the adjacent pair $\{i,i+1\}$ and $2$ choices for the position of the $1$-entry, so $E(n,5)=2n$ (cf.\ \eqref{eq:e5}). Hence
\[
  d_I(C_n,2n-5)
  \;=\; \binom{n}{2}(n-2)+n\binom{n-1}{3}+\binom{n}{5}-2n .
\]
Expanding and collecting over the common denominator $120$ gives
\[
  d_I(C_n,2n-5)=\frac{n(n^4+10n^3-25n^2-10n-216)}{120}.
\]
(For $n=4$, the formula gives $8$, matching Table~\ref{tab:cycle_a}; for $n=3$, $2n-5=1<\gamma_I(C_3)=2$, so $d_I(C_3,1)=0$ and the formula is not applicable.)
\end{proof}

\begin{remark}
The formulas in Theorem~\ref{thm:closedform} have been verified computationally for $n = 3, \ldots, 12$ (items (i)--(v)) and $n = 4, \ldots, 12$ (item (vi)) by brute-force enumeration.
\end{remark}

\begin{remark}
The derivation above highlights the connection with constrained integer compositions: for $j\le 4$ the cycle constraint never binds, so $d_I(C_n,2n-j)$ equals the trinomial coefficient $T(n,j)$; for $j=5$ it equals $T(n,5)$ minus $2n$ explicitly excluded configurations. By Lemma~\ref{lem:poly}, the polynomiality of $d_I(C_n,2n-j)$ in $n$ (of degree $j$, with leading coefficient $1/j!$) is a theorem rather than a conjecture; the exclusion-counting method of this section gives the systematic route to explicit closed forms for all fixed $j$.
\end{remark}

\begin{remark}[Comparison with the Roman domination polynomial of cycles]
\label{rem:romancompare}
Deepak, Alwardi and Indiramma \cite{deepak2021} derived closed forms for the coefficients $r(C_n,k)$ of the Roman domination polynomial of cycles. Their values $r(C_n,2n-2)=n(n+1)/2$ and $r(C_n,2n-3)=n(n-1)(n+4)/6$ coincide with items (iii) and (iv) of Theorem~\ref{thm:closedform}, whereas for $j=4$ and $j=5$ the formulas differ. The coincidence is a weight-conservation phenomenon: for $j\le 3$ a vertex with $f(v)=0$ (deficit $2$) cannot have two neighbors with $f=1$ (deficits $1$ and $1$), since the total deficit would be at least $4$; hence no configuration exploits the relaxation that distinguishes Italian from Roman domination, and the two polynomials agree for $j\le 3$. For $j=4$ such configurations first appear, e.g., the deficit pattern $(1,2,1)$ on three consecutive positions (i.e., $f=(1,0,1)$ on three consecutive vertices), which is a valid Italian dominating function but not a Roman dominating function; this explains the divergence of the closed forms for $j\ge 4$.
\end{remark}

\section{Numerical Results}
\label{sec:numerics}

All coefficients in the tables below were computed with the corresponding dynamic programming algorithms (Sections~\ref{sec:path}--\ref{sec:cycle}) and independently verified by brute-force enumeration for every tabulated value. The verifier enumerates all $3^n$ candidate functions (for $n\le 12$) and checks the Italian condition directly; it is a compact Python script that will accompany the arXiv version of this paper.

\subsection{Italian domination on path graphs}

Table~\ref{tab:path_a} and Table~\ref{tab:path_b} present the coefficients $d_I(P_n, k)$ for $n=1,\ldots,10$ and $k=1,\ldots,20$, computed by Algorithm~\ref{alg:path}.

\begin{table}[ht]
\centering
\caption{Coefficients $d_I(P_n, k)$ for path graphs, $k=1,\ldots,10$}
\label{tab:path_a}
\begin{tabular}{c|*{10}{r}}
\toprule
$n\backslash k$ & 1 & 2 & 3 & 4 & 5 & 6 & 7 & 8 & 9 & 10 \\
\midrule
1 & 1 & 1 & --- & --- & --- & --- & --- & --- & --- & --- \\
2 & 0 & 3 & 2 & 1 & --- & --- & --- & --- & --- & --- \\
3 & 0 & 2 & 5 & 6 & 3 & 1 & --- & --- & --- & --- \\
4 & 0 & 0 & 4 & 13 & 14 & 10 & 4 & 1 & --- & --- \\
5 & 0 & 0 & 1 & 13 & 29 & 37 & 28 & 15 & 5 & 1 \\
6 & 0 & 0 & 0 & 6 & 32 & 73 & 92 & 80 & 48 & 21 \\
7 & 0 & 0 & 0 & 1 & 19 & 86 & 178 & 237 & 218 & 149 \\
8 & 0 & 0 & 0 & 0 & 6 & 62 & 220 & 449 & 604 & 594 \\
9 & 0 & 0 & 0 & 0 & 1 & 27 & 179 & 576 & 1129 & 1556 \\
10 & 0 & 0 & 0 & 0 & 0 & 7 & 96 & 513 & 1492 & 2875 \\
\bottomrule
\end{tabular}
\end{table}

\begin{table}[ht]
\centering
\caption{Coefficients $d_I(P_n, k)$ for path graphs, $k=11,\ldots,20$}
\label{tab:path_b}
\begin{tabular}{c|*{10}{r}}
\toprule
$n\backslash k$ & 11 & 12 & 13 & 14 & 15 & 16 & 17 & 18 & 19 & 20 \\
\midrule
6 & 6 & 1 & --- & --- & --- & --- & --- & --- & --- & --- \\
7 & 75 & 28 & 7 & 1 & --- & --- & --- & --- & --- & --- \\
8 & 440 & 252 & 110 & 36 & 8 & 1 & --- & --- & --- & --- \\
9 & 1597 & 1274 & 800 & 398 & 154 & 45 & 9 & 1 & --- & --- \\
10 & 4006 & 4283 & 3616 & 2458 & 1350 & 597 & 208 & 55 & 10 & 1 \\
\bottomrule
\end{tabular}
\end{table}

\noindent\textbf{Note:} The entry ``---'' indicates that the corresponding weight exceeds $2n$ (the maximum possible weight), so the coefficient is~$0$.

\subsection{Italian domination on star graphs}

Table~\ref{tab:star} presents $d_I(K_{1,m}, k)$ for star graphs with $m=1,\ldots,8$ leaves, computed by Algorithm~\ref{alg:tree}.

\begin{table}[ht]
\centering
\caption{Coefficients $d_I(K_{1,m}, k)$ for star graphs}
\label{tab:star}
\begin{tabular}{c|*{12}{r}}
\toprule
$m\backslash k$ & 1 & 2 & 3 & 4 & 5 & 6 & 7 & 8 & 9 & 10 & 11 & 12 \\
\midrule
1 & 0 & 3 & 2 & 1 & --- & --- & --- & --- & --- & --- & --- & --- \\
2 & 0 & 2 & 5 & 6 & 3 & 1 & --- & --- & --- & --- & --- & --- \\
3 & 0 & 1 & 4 & 10 & 13 & 10 & 4 & 1 & --- & --- & --- & --- \\
4 & 0 & 1 & 4 & 11 & 21 & 29 & 26 & 15 & 5 & 1 & --- & --- \\
5 & 0 & 1 & 5 & 15 & 31 & 51 & 66 & 65 & 45 & 21 & 6 & 1 \\
6 & 0 & 1 & 6 & 21 & 50 & 91 & 133 & 162 & 161 & 125 & 71 & 28 \\
7 & 0 & 1 & 7 & 28 & 77 & 161 & 267 & 365 & 421 & 413 & 336 & 217 \\
8 & 0 & 1 & 8 & 36 & 112 & 266 & 504 & 785 & 1025 & 1143 & 1100 & 910 \\
\midrule
$m\backslash k$ & 13 & 14 & 15 & 16 & 17 & 18 & \multicolumn{6}{c}{} \\
\midrule
6 & 7 & 1 & --- & --- & --- & --- & \multicolumn{6}{c}{} \\
7 & 105 & 36 & 8 & 1 & --- & --- & \multicolumn{6}{c}{} \\
8 & 630 & 350 & 148 & 45 & 9 & 1 & \multicolumn{6}{c}{} \\
\bottomrule
\end{tabular}
\end{table}

\subsection{Italian domination on cycle graphs}

Table~\ref{tab:cycle_a} and Table~\ref{tab:cycle_b} present $d_I(C_n, k)$ for $n=3,\ldots,12$ and $k=1,\ldots,24$, computed by Algorithm~\ref{alg:cycle}.

\begin{table}[ht]
\centering
\small
\caption{Coefficients $d_I(C_n, k)$ for cycle graphs, $k=1,\ldots,12$}
\label{tab:cycle_a}
\begin{tabular}{c|*{12}{r}}
\toprule
$n\backslash k$ & 1 & 2 & 3 & 4 & 5 & 6 & 7 & 8 & 9 & 10 & 11 & 12 \\
\midrule
3 & 0 & 6 & 7 & 6 & 3 & 1 & --- & --- & --- & --- & --- & --- \\
4 & 0 & 2 & 8 & 19 & 16 & 10 & 4 & 1 & --- & --- & --- & --- \\
5 & 0 & 0 & 5 & 25 & 41 & 45 & 30 & 15 & 5 & 1 & --- & --- \\
6 & 0 & 0 & 2 & 18 & 60 & 105 & 114 & 90 & 50 & 21 & 6 & 1 \\
7 & 0 & 0 & 0 & 7 & 49 & 154 & 260 & 301 & 252 & 161 & 77 & 28 \\
8 & 0 & 0 & 0 & 2 & 24 & 144 & 384 & 659 & 784 & 704 & 488 & 266 \\
9 & 0 & 0 & 0 & 0 & 9 & 87 & 387 & 990 & 1663 & 2052 & 1935 & 1446 \\
10 & 0 & 0 & 0 & 0 & 2 & 35 & 270 & 1055 & 2530 & 4253 & 5350 & 5285 \\
11 & 0 & 0 & 0 & 0 & 0 & 11 & 132 & 814 & 2827 & 6523 & 10891 & 13992 \\
12 & 0 & 0 & 0 & 0 & 0 & 2 & 48 & 462 & 2360 & 7572 & 16836 & 28033 \\
\bottomrule
\end{tabular}
\end{table}

\begin{table}[ht]
\centering
\small
\caption{Coefficients $d_I(C_n, k)$ for cycle graphs, $k=13,\ldots,24$}
\label{tab:cycle_b}
\begin{tabular}{c|*{12}{r}}
\toprule
$n\backslash k$ & 13 & 14 & 15 & 16 & 17 & 18 & 19 & 20 & 21 & 22 & 23 & 24 \\
\midrule
7 & 7 & 1 & --- & --- & --- & --- & --- & --- & --- & --- & --- & --- \\
8 & 112 & 36 & 8 & 1 & --- & --- & --- & --- & --- & --- & --- & --- \\
9 & 864 & 414 & 156 & 45 & 9 & 1 & --- & --- & --- & --- & --- & --- \\
10 & 4190 & 2710 & 1432 & 615 & 210 & 55 & 10 & 1 & --- & --- & --- & --- \\
11 & 14322 & 11968 & 8261 & 4741 & 2255 & 880 & 275 & 66 & 11 & 1 & --- & --- \\
12 & 36588 & 38670 & 33760 & 24666 & 15168 & 7858 & 3408 & 1221 & 352 & 78 & 12 & 1 \\
\bottomrule
\end{tabular}
\end{table}

\subsection{Examples on specific trees}

We verify Algorithm~\ref{alg:tree} on several non-path, non-star trees.

\begin{example}[Complete binary tree with 7 vertices]
\label{ex:binary}
Consider the complete binary tree with root~$v_1$, children $v_2, v_3$, and grandchildren $v_4, v_5$ (under~$v_2$) and $v_6, v_7$ (under~$v_3$). Algorithm~\ref{alg:tree} gives:
\[
\begin{aligned}
  D_I(T, x) &= 3x^4 + 18x^5 + 59x^6 + 125x^7 + 182x^8 + 186x^9 \\
  &\quad + 137x^{10} + 73x^{11} + 28x^{12} + 7x^{13} + x^{14},
\end{aligned}
\]
verified by brute force.
\end{example}

\begin{example}[Spider $S(2,2,2)$]
\label{ex:spider}
The spider $S(2,2,2)$ is the tree obtained from the star $K_{1,3}$ by subdividing each edge once, giving 7 vertices: a center connected to three paths of length~$2$. Algorithm~\ref{alg:tree} gives:
\[
  D_I(T, x) = x^4 + 10x^5 + 65x^6 + 150x^7 + 213x^8 + 203x^9 + 143x^{10} + 74x^{11} + 28x^{12} + 7x^{13} + x^{14},
\]
verified by brute force.
\end{example}

\begin{example}[Caterpillar tree]
\label{ex:caterpillar}
Consider the caterpillar tree with backbone $v_1$--$v_2$--$v_3$--$v_4$ and pendant leaves $v_5$ at~$v_2$ and $v_6$ at~$v_3$ (6 vertices total). Algorithm~\ref{alg:tree} gives:
\[
  D_I(T, x) = 4x^4 + 20x^5 + 49x^6 + 72x^7 + 70x^8 + 46x^9 + 21x^{10} + 6x^{11} + x^{12},
\]
verified by brute force.
\end{example}

\begin{example}[Italian domination polynomials for small cycles]
\label{ex:cyclepoly}
The Italian domination polynomials for small cycles are:
\begin{align*}
  D_I(C_3, x) &= 6x^2 + 7x^3 + 6x^4 + 3x^5 + x^6, \\
  D_I(C_4, x) &= 2x^2 + 8x^3 + 19x^4 + 16x^5 + 10x^6 + 4x^7 + x^8, \\
  D_I(C_5, x) &= 5x^3 + 25x^4 + 41x^5 + 45x^6 + 30x^7 + 15x^8 + 5x^9 + x^{10}, \\
  D_I(C_6, x) &= 2x^3 + 18x^4 + 60x^5 + 105x^6 + 114x^7 + 90x^8 + 50x^9 \\
  &\quad + 21x^{10} + 6x^{11} + x^{12}.
\end{align*}
All verified by brute force.
\end{example}

\subsection{Observations}

From the numerical tables, we observe the following patterns:

\begin{enumerate}
  \item \textbf{Maximum weight}: $d_I(G, 2n)=1$ for all graphs with $n$ vertices, corresponding to $f\equiv 2$ (Proposition~\ref{prop:basic}).

  \item \textbf{Minimum weight on paths}: $\gamma_I(P_n) = \lfloor n/2\rfloor + 1$ (Proposition~\ref{prop:gamma_path}), giving the sequence $1, 2, 2, 3, 3, 4, 4, 5, 5, 6, \ldots$ for $n=1,2,\ldots$

  \item \textbf{Minimum weight on cycles}: $\gamma_I(C_n) = \lceil n/2\rceil$ (Proposition~\ref{prop:gamma_cycle}), giving the sequence $2, 2, 3, 3, 4, 4, 5, 5, 6, 6, \ldots$ for $n=3,4,\ldots$

  \item \textbf{Cycle dominates path}: The dominance $d_I(C_n, k) \ge d_I(P_n, k)$ (Remark~\ref{rem:dom}) is confirmed for all values tested.

  \item \textbf{Unimodality (conjecture)}: Both sequences $(d_I(P_n,k))_k$ and $(d_I(C_n,k))_k$ appear unimodal in every case tested.

  \item \textbf{Closed-form verification}: The formulas of Theorem~\ref{thm:closedform} match the computed values exactly for all tested $n$.

  \item \textbf{Total count on paths}: The total number of Italian dominating functions on $P_n$ grows as $2, 6, 17, 46, 129, 359, 999, 2782, 7746, 21567$ for $n=1,\ldots,10$.

  \item \textbf{Total count on cycles}: For $n=3,\ldots,12$, the total number of Italian dominating functions on $C_n$ is $23, 60, 167, 467, 1297, 3612,$ $10058, 28003, 77970,$ and $217095$. The ratio $T(n+1)/T(n)$ converges to approximately $2.7843$, the same limiting ratio as for paths.

  \item \textbf{Composition viewpoint}: In the deficit formulation (Section~\ref{sec:partitions}), the unconstrained total $\sum_j T(n,j)=3^n$ counts all compositions; the Italian constraints reduce the counts to $\approx (2.7843)^n$ on paths and cycles. The difference is accounted for entirely by the local constraints (P2). Moreover, for near-maximum weights the coefficients of $C_n$ equal the trinomial coefficients $T(n,j)$ for $j\le 4$ (Theorem~\ref{thm:closedform}).

  \item \textbf{Star graph structure}: For star graphs $K_{1,m}$, we observe $d_I(K_{1,m}, 2) = 1$ for $m\ge 3$ (only the function assigning $f(\text{center})=2$ and $f(\text{leaves})=0$ achieves weight~$2$), while $d_I(K_{1,1}, 2)=3$ and $d_I(K_{1,2}, 2)=2$.
\end{enumerate}

\section{Conclusion}

We have studied the Italian domination polynomial $D_I(G,x)$, the counting analogue of the Italian domination number, on paths, trees, and cycles. The central structural insight of the paper is that counting weight-$k$ Italian dominating sets is a \emph{constrained integer composition} problem: via the deficit transform $y(v)=2-f(v)$, the coefficient $d_I(G,2n-j)$ counts compositions of $j$ into $n$ parts, each at most~$2$, that satisfy a local capacity constraint (Lemma~\ref{lem:deficit}). This viewpoint gives clean derivations of the closed forms $d_I(C_n,2n-j)$, $j=0,\ldots,5$, as trinomial coefficients minus explicitly enumerated excluded configurations, and explains the coefficient-wise dominance of cycles over paths through endpoint constraints (Remark~\ref{rem:endpoints}).

On the algorithmic side, we presented three dynamic programming algorithms with full correctness proofs and complexity analyses: a chain DP for paths, a post-order merge DP with a deficit parameter for trees, and a boundary-conditioning DP for cycles, each running in $O(n^2)$ time and $O(n)$ space. All numerical results were verified by brute-force enumeration, the known values $\gamma_I(P_n)=\lfloor n/2\rfloor+1$ and $\gamma_I(C_n)=\lceil n/2\rceil$ were recalled with references, and Lemma~\ref{lem:poly} establishes that $d_I(C_n,2n-j)$ is a polynomial in $n$ of degree $j$ for each fixed $j$.

Several directions remain open for future work:
\begin{itemize}
  \item Extending the tree and cycle DP to broader graph classes, such as bounded-treewidth graphs and cactus graphs, where a similar state structure may apply.
  \item Deriving a complete linear recurrence for the total count $\sum_k d_I(C_n, k)$ using the transfer matrix formulation (the observed limiting ratio $\approx 2.7843$ is the dominant eigenvalue of the corresponding transfer matrix).
  \item Proving the unimodality conjecture for the coefficient sequences.
  \item Deriving explicit closed forms for $d_I(C_n,2n-j)$ for general $j$ via the exclusion-counting method of Theorem~\ref{thm:closedform} (the polynomiality of these coefficients in $n$ is already established in Lemma~\ref{lem:poly}).
  \item Exploiting the constrained-composition viewpoint further, e.g., by relating $D_I$ to $q$-analogues of the trinomial coefficients and to the Roman domination polynomial of \cite{mirhoseini2024,deepak2021}.
  \item Characterizing which non-isomorphic graphs share the same Italian domination polynomial.
\end{itemize}



\begin{thebibliography}{20}\bibitem{cockayne2004}
E.~J. Cockayne, P.~A. Dreyer~Jr., S.~M. Hedetniemi, and S.~T. Hedetniemi,
Roman domination in graphs,
\textit{Discrete Mathematics}, \textbf{278}(1--2):11--22, 2004.

\bibitem{chellali2016}
M.~Chellali, T.~W. Haynes, S.~T. Hedetniemi, and A.~A. McRae,
Roman $\{2\}$-domination,
\textit{Discrete Applied Mathematics}, \textbf{204}:22--28, 2016.

\bibitem{chellali2021}
M.~Chellali, N.~Jafari Rad, S.~M. Sheikholeslami, and L.~Volkmann,
Variants of Roman domination in graphs,
in: T.~W. Haynes, S.~T. Hedetniemi, and M.~A. Henning (Eds.),
\textit{Structures of Domination in Graphs}, Developments in Mathematics 66, Springer, Cham, 2021.

\bibitem{brouwer2012}
A.~E. Brouwer, P.~Csorba, and L.~Strijov,
The Italian domination number of a graph,
\textit{manuscript}, 2012.

\bibitem{henningklostermeyer2017}
M.~A. Henning and W.~F. Klostermeyer,
Italian domination in trees,
\textit{Discrete Applied Mathematics}, \textbf{217}:557--564, 2017.

\bibitem{klostermeyer2019}
W.~F. Klostermeyer and G.~MacGillivray,
Roman, Italian, and 2-domination,
\textit{Journal of Combinatorial Mathematics and Combinatorial Computing}, \textbf{108}:125--146, 2019.

\bibitem{haynes2020}
T.~W. Haynes, S.~T. Hedetniemi, and M.~A. Henning,
\textit{Topics in Domination in Graphs},
Developments in Mathematics 64, Springer, Cham, 2020.

\bibitem{alikhani2009}
S.~Alikhani and Y.~H. Peng,
Introduction to domination polynomial of a graph,
arXiv:0905.2251 [math.CO], 2009.

\bibitem{kotek2012}
T.~Kotek, J.~Preen, F.~Simon, P.~Tittmann, and M.~Trinks,
Recurrence relations and splitting formulas for the domination polynomial,
\textit{Electronic Journal of Combinatorics}, \textbf{19}(3):\#P47, 2012.

\bibitem{kotek2014}
T.~Kotek, J.~Preen, and P.~Tittmann,
Subset-sum representations of domination polynomials,
\textit{Graphs and Combinatorics}, \textbf{30}(3):647--660, 2014.

\bibitem{mirhoseini2024}
S.~H. Mirhoseini and N.~Jafari Rad,
On the Roman domination polynomials,
\textit{Communications in Combinatorics and Optimization}, \textbf{9}(3):595--605, 2024.

\bibitem{deepak2021}
G.~Deepak, A.~Alwardi, and M.~H. Indiramma,
Roman domination polynomial of cycles,
\textit{International Journal of Mathematical Combinatorics}, \textbf{3}:86--97, 2021.

\bibitem{zheng2014}
S.~Zheng and K.~M. Koh,
Counting the number of minimum Roman dominating functions of a graph,
arXiv:1403.1019 [cs.DS], 2014.

\bibitem{abukhzam2024}
F.~N. Abu-Khzam, H.~Fernau, and K.~Mann,
Minimal Roman dominating functions: extensions and enumeration,
\textit{Algorithmica}, \textbf{86}(6):1862--1887, 2024.

\bibitem{kijima2011}
S.~Kijima, Y.~Okamoto, and T.~Uno,
Dominating set counting in graph classes,
in: \textit{Proc. COCOON 2011}, Lecture Notes in Computer Science 6842, Springer, 2011, pp.~13--24.

\bibitem{weilu2019}
D.~Wei and C.~Lu,
Independent double Roman domination on block graphs,
arXiv:1908.00784 [math.CO], 2019.

\bibitem{mertens2024}
S.~Mertens,
Domination polynomials of the grid, the cylinder, the torus, and the king graph,
arXiv:2408.08053 [math.CO], 2024.

\bibitem{andrews1998}
G.~E. Andrews,
\textit{The Theory of Partitions},
Encyclopedia of Mathematics and its Applications 2, Cambridge University Press, Cambridge, 1998.

\bibitem{bendercanfield2009}
E.~A. Bender and E.~R. Canfield,
Locally restricted compositions II: General restrictions and infinite matrices,
\textit{Electron.~J.~Combin.}, \textbf{16}(1):\#R108, 2009.

\bibitem{ferrari2025}
A.~J. Ferrari, V.~Leoni, and M.~I. Lopez Pujato,
Characterizations of Italian graphs and Sicilian graphs,
\textit{RAIRO-Operations Research}, \textbf{59}(1):239--249, 2025.


\end{thebibliography}
\end{document}